\documentclass[11pt,bezier]{article}
\usepackage{amsmath,amssymb,amsfonts,euscript}
\usepackage[usenames]{color}
\textwidth 5.5 in \textheight 7.5 in \topmargin=-2cm
\setlength{\oddsidemargin}{5mm} \voffset 1 in
\newpage

\usepackage{graphicx}
\usepackage{dsfont}
\usepackage{stmaryrd}
\usepackage{textcomp}
\usepackage{bbold}
\usepackage{pifont}
\usepackage{bbm}
\setlength\hoffset{-1in}
\setlength\oddsidemargin{2.5cm}
\setlength\textwidth{16cm}
\setlength\textheight{22cm}

\newtheorem{theo}{Theorem}
\newtheorem{defi}[theo]{Definition}
\newtheorem{lemma}[theo]{Lemma}
\newtheorem{pro}[theo]{Proposition}
\newtheorem{remark}[theo]{Remark}

\newenvironment{proof}{\trivlist\item[]\textbf{Proof.\ }}
                      {\endtrivlist}

\title{\bf\Large The Scale Invariant Wigner Spectrum Estimation of Gaussian Locally Self-Similar Processes}
\vspace{-1cm}
\author{Y. Maleki, S. Rezakhah \footnote{Faculty of Mathematics and Computer Science, Amirkabir University of Technology, 424 Hafez Avenue,
Tehran 15914, Iran. Email: ymaleki@aut.ac.ir, rezakhah@aut.ac.ir.}}

\date{June 2012}
\begin{document}
\maketitle

\begin{abstract}
We study locally self-similar processes (LSSPs) in Silverman's sense. By deriving the minimum mean-square optimal kernel within Cohen's class counterpart of time-frequency representations, we obtain an optimal estimation for the scale invariant Wigner spectrum (SIWS) of Gaussian LSSPs. The class of estimators is completely characterized in terms of kernels, so the optimal kernel minimizes the  mean-square error of the estimation. We obtain the SIWS estimation for two cases: global and local, where in the local case, the kernel is allowed to vary with time and frequency. We also introduce two generalizations of LSSPs: the locally self-similar chrip process and the multicomponent locally self-similar process, and obtain their optimal kernels. Finally, the performance and accuracy of the estimation is studied via simulation.\\ \quad \\
{\it Keywords}: Locally self-similar circularly symmetric Gaussian processes, scale invariant Wigner spectrum (SIWS), optimal estimation, time-frequency analysis.\\ \quad \\
{\it Mathematics Subjects Classification:} 62F10, 62F15, 62N02

\end{abstract}

\section{Introduction}
\par
Scale invariance (or self-similarity), once acknowledged as an important feature, has often been used as a fundamental property to interpret many natural and man-made phenomena \cite{fla}. Self-similar processes have been used successfully to model data exhibiting long memory and arising in a wide variety of fields, ranging from physics (turbulence, hydrology, solid-state physics, $\cdots$) or biology (DNA sequences, heat rate variability, auditory nerves spike trains, $\cdots$), to human-operated systems (telecommunications network traffic, image processing, pattern recognition,  finance, $\cdots$) \cite{fla7}. Because they may correspond to non-standard situations in signal processing or time-series analysis (non-stationarity, long range dependence, $\cdots$), scale invariant processes raise challenging problems in terms of analysis, synthesis, and processing (filtering, prediction, $\cdots$), and a number of specific tools have, however, developed over the years \cite{fla7}.
Even though there is no single definition of scale invariance, it is often described as a symmetry of the system relatively to a transformation of a scale, that is mainly a dilation or a contraction (up to some renormalization) of the system parameters \cite{fla}. In mathematical expression, a process is scale invariant (or self-similar, denoted 'H-ss') \cite{fla}, if for all $\lambda>0$,
\begin{equation}
\{(D_{H,\lambda}X)(t):=\lambda^{-H}X(\lambda t),t>0\}\equiv \{ X(t),t>0\}\label{hss}
\end{equation}
(where $\equiv$ means equal in all finite dimensional distributions). The index $H$ characterizes the self-similar behavior of the process, and a very large variety of methods has been proposed in the literature for estimating it \cite{bardet}, \cite{beran}, \cite{corjoly2001}.
\par
Sometimes it happens that the H-ss model is not quite adequate for real world phenomenon, and it would be useful to consider more general classes of stochastic processes, which are characterized by more than one parameter, but still preserve some of the good properties of scale invariant processes. Therefore, larger classes of stochastic processes have been introduced, for example the class of multiplicative harmonizable processes introduced by Borgnat and Flandrin \cite{fla1}, which admits the use of techniques for harmonic analysis. Locally self-similar processes (LSSPs) constitute another class of extensions of H-ss processes. This class of processes can be used to describe physical systems for which statistical characteristics change slowly in time. Based on different situations, several definitions have been raised for LSSPs. Out of the large literature, we mention the work of Flandrin et {\it{al}} \cite{fla5}, \cite{fla3} in introduction of LSSPs as Lamperti transformation of locally stationary processes in Silverman's sense \cite{silverman}. Besides, Flandrin and Goncalves \cite{fla6} and Goncalves and Flandrin  \cite{gon} proposed LSSPs in a sense that the self-similarity parameter, $H(t)$, is allowed to vary with time, and discuss their applications.  Cavanaugh  et {\it{al}} \cite{cavanaugh1},  Courjouly \cite{corjoly2001}, Goncalves and Abry \cite{abry},  Kent and Wood \cite{kent}, Stoev et {\it{al}} \cite{stoev} and Wang et {\it{al}} \cite{cavanaugh2} studied LSSPs in the latter sense, and estimated the local Hurst parameter $H(t)$. In addition, Wang et {\it{al}} \cite{cavanaugh2} and Constantine \cite{cons} investigated fitting LSSPs to a geophysical and  aerothermal turbulence  time series respectively. Boufoussi et {\it{al}} \cite{bou} studied path properties of a class of local asymptotic self-similar process. Moreover, Muniandy and Lim \cite{mou} considered modeling of LSSPs using multifractional Brownian motion.
We also mention the work of Istas and Lacauxy \cite{istas} on locally self-similar fractional random fields.
 The road to LSSPs analysis was followed especially in time-domain and Hurst parameter estimation.
\par
In this paper, we consider LSSPs in Silverman's sense \cite{fla}, \cite{silverman}, and we study their spectral analysis.
In this sense, a stochastic process is LSSP \cite{fla}, if its covariance function, $R_X(t,s)=E(X(t)X^*(s))$, has the simple form
$$R_X(t,s)=(ts)^Hq(\ln \sqrt{ts})C_X(t/s):=Q(\sqrt{ts})C_X(t/s),$$
where $Q(t):=t^{2H}q(\ln{t})$, i.e., the covariance function can be decomposed into a covariance function of an H-ss process multiplied by a modulatory function.

Although there is no "universal definition" for nonstationary spectrum, some definitions have been proposed for restricted classes. More notable ones are the evolutionary spectrum (ES) proposed by Priestley \cite{prist} for the class of oscillatory processes, and the Wigner-Ville spectrum (WVS) proposed by Martin \cite{martin} for the class of harmonizable processes \cite{say}. Because of a few important mathematical properties of the WVS over the ES and other nonstationary spectrums, such as uniqueness and that the WVS is explicitly defined in terms of the covariance function; the WVS has been more desirable than other spectrums \cite{say}. So, we adopt the WVS as our definition for the nonstationary spectrum and address the problem of estimating it for Gaussian LSSPs. However, the scale invariant property of LSSPs, makes such processes different from other nonstationary ones. Therefore, a particular type of the Wigner spectrum should be considered, which is compatible with this property. In this case, for scale invariant signals, a bilinear time-dependent extension of the Mellin transform  proposed by Marinovic \cite{marinovic} and Altes \cite{altes}, which will be referred to as the scale invariant Wigner Distribution (SIWD). Flandrin in \cite{fla5} provided some results of the SIWD and extended it to stochastic processes.

The scale invariant Wigner spectrum (SIWS) \cite{fla5} of a process $X(t)$ is defined as an expectation of the SIWD, and it is a function of time $t$ and frequency $\xi$, which reads as
\begin{equation}
W_{E,X}(t,\xi)=E\{\int_{-\infty}^{\infty}X(t\sqrt{\tau})X^*(t/\sqrt{\tau}) \tau^{-i2\pi \xi-1} d\tau \}
\end{equation}
$$\;\;\quad \quad=\int_{-\infty}^{\infty}R_X(t\sqrt{\tau}, t/\sqrt{\tau}) \tau^{-i2\pi \xi-1} d\tau. $$
The usefulness of the SIWS has been emphasized for revealing scale invariant feature in some nonstationary processes.
By contraction, the SIWS describes the time evolution of Mellin's variable in a similar way as the WVS does for the (Fourier) frequency variable \cite{fla}. To avoid confusion, the classical Wigner-Ville spectrum will be referred to as WVS, and the scale invariant spectrum will be referred to as SIWS.

In this paper, we estimate the SIWS of a Gaussian LSSP, using the mean-square optimal kernel method. The optimal kernel method is proposed by  Sayeed and Jones \cite{say} for nonstationary spectrum estimation; and is based on finding the best estimator that optimizes the bias-variance trade-off in the sense of giving minimum mean-square error (MMSE). However, as we mentioned, the scale invariant feature of the process and consequently, the spectrum, makes the SIWS estimation method different from the other nonstationary spectrum estimations. So, we modify the method to be reconcile to our estimation problem. Moreover, as the class of estimators, we choose Cohen's class counterpart  of time-frequency representations (TFR's), which is proposed by Flandrin \cite{fla5} as a generalization of Cohen's class \cite{cohen} for scale invariant signals. Since the Cohen's class counterpart is completely characterized in terms of kernels, the problem of estimation is reduced to finding the "best" kernel in the sense of giving the minimum mean-square error. The SIWS estimator is obtained in two cases: global case, and local case; where in the local case, the optimal kernel is allowed to vary with time and frequency.

Now, we present an outline of the paper. Next section, concerns the background on self-similar, multiplicative harmonizable, circularly symmetric processes. Section 3 treats locally self-similar processes, and we study necessary conditions on two functions, that constitute the covariance function of a LSSP. In Section 4, we discuss the global and local estimation problems of SIWS for Gaussian LSSPs. In Section 5, we introduce two extensions of LSSPs; also, we present one example and the performance of the method is studied via simulations.

\section{Preliminaries}
The Fourier transform of a function $f \in L^1
(\mathbbm{R})$ is defined and denoted by \cite{wal2}
$$(\mathcal{F}f)(\xi):=\hat{f}(\xi):=\int_{\mathbbm{R}}f(t) e^{-i2\pi t\xi}dt.$$

A function $f: \mathbbm{R}^2  \rightarrow \mathbbm{C}$ is nonnegative definite  \cite{berg}, denoted $f \in NND(\mathbbm{R}^2)$, if
\begin{equation}
\sum_{j,k=1}^n f(t_j,t_k)z_j z^*_k \geq 0 \quad \quad \forall \{t_j\}_{j=1}^n \subset \mathbbm{R}, \quad \{z_j\}_{j=1}^n \subset \mathbbm{C}, \quad n>0,\label{1.8}
\end{equation}
and it is called weakly nonnegative definite \cite{berg}, \cite{wal4}, denoted $f \in WNND(\mathbbm{R}^2)$, if $f$ is bounded, measurable, and
\begin{equation}
\int\int_{\mathbbm{R}^2} f(t,s)\vartheta(t) \vartheta^*(s) dt ds \geq 0 \quad \quad \forall\vartheta \in C_c(\mathbbm{R}).\label{1.9}
\end{equation}
where $C_c(\mathbbm{R})$ denotes the compactly supported continuous functions.
If $f$ is continuous and bounded, then $f \in NND(\mathbbm{R}^2)$ if and only if $f \in WNND(\mathbbm{R}^2) $ \cite{berg}. For generalized functions, we mean by $f \in NND(\mathcal{S}(\mathbbm{R}^2))$ that $f \in \mathcal{S'}(\mathbbm{R}^2)$ and
\begin{equation}
(f,\vartheta \varotimes \vartheta^*)\geq 0 \quad \forall \vartheta \in \mathcal{S}(\mathbbm{R}),\label{pre}
\end{equation}
where $\mathcal{S}(\mathbbm{R}^d)$ is the Schwartz space of smooth functions such that a derivative of any order multiplied with any polynomial is uniformly bounded, and $\mathcal{S'}(\mathbbm{R}^d)$ is its dual, the tempered distributions \cite{hormander}, \cite{wal4}; and the bracket $(f,g)=\int_{R^d} f(x)g^*(x)dx$ denotes the inner product on $L^2(\mathbbm{R}^d)$.
 A function $f$ belongs to $NND(\mathbbm{R}^2)$ if and only if it is the covariance function of a mean-square continuous stochastic process \cite{loeve}.

A function $f: \mathbbm{R} \rightarrow \mathbbm{C}$ of one real variable is nonnegative definite, denoted $f \in NND(\mathbbm{R})$, if
$$\sum_{j,k=1}^n f(t_j-t_k)z_j z^*_k \geq 0 \quad \quad \forall \{t_j\}_{j=1}^n \subset \mathbbm{R}, \quad \{z_j\}_{j=1}^n \subset \mathbbm{C}, \quad n>0.$$

We will use some results about measure theory \cite{rudin}. The domain of a complex-valued measure $\mu$ on $\mathbbm{R}$ is a class of subsets of $\mathbbm{R}$, which is closed under countable unions and complement, and it includes the empty set. We use the Borel $\sigma$-algebra, denoted $\mathcal{B}(\mathbbm{R})$, as a domain for measures. A measure fulfills $\mu(\emptyset)=0$. It is countably additive in the sense of $\mu(\bigcup_{k=1}^\infty{A_k})=\sum_{k=1}^\infty \mu(A_k)$ when $\{A_k\}_{k=1}^\infty$ are pair wise disjoint. The total variation of a measure $\mu$ is defined as:
$$|\mu|(A)= sup \sum_{k=1}^\infty |\mu(A_k)|, \qquad A \in \mathcal{B}(\mathbbm{R})  $$
where the supremum is taken over all $\{A_k\}_{k=1}^\infty \subset \mathcal{B}$ such that $\bigcup_{k=1}^\infty A_k=A$ and  $\{A_k\}_{k=1}^\infty$ are pairwise disjoint. The total variation, $|\mu|$, is a finite measure if  $\mu$ is a finite measure.

\subsection{Self-Similar processes}
A stochastic process $\{X(t), t>0\}$ is said to be (statistically) self-similar of index $H$ \cite{fla} (or scale-invariant, denoted 'H-ss'), if
for any $\lambda>0$, Eq (\ref{hss}) is satisfied; i.e., a self-similar process is invariant under any renormalized dilation operator $D_{H,\lambda}$ by a scale factor $\lambda$ \cite{fla}, \cite{fla3}.

A random process $\{X(t), t>0\}$ is called wide sense self-similar with parameter $H$ \cite{kashyap}, if it satisfies the following conditions:

\begin{enumerate}
\item $E(X(t))=\lambda^{-H}E(X(\lambda t))$ for all $t, \lambda>0$
\item $E(X^2(t))<\infty$ for each $t>0$
\item $E(X(t_1)X^*(t_2))=\lambda^{-2H}E(X(\lambda t_1)X^*(\lambda t_2))$ for all $t_1, t_2, \lambda>0.$

\end{enumerate}

\begin{pro}
Any wide sense H-ss processes $\{X(t), t>0\}$ has a covariance function of the form \cite{fla}
\begin{equation}
R_X(t,s)=(ts)^H C_X(\frac{t}{s}),\label{1'}
\end{equation}
for any $t,s>0$, and $C_X(.)\in NND(\mathbbm{R}^+)$.
\end{pro}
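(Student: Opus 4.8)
The plan is to exploit the wide-sense self-similarity condition (3) directly, choosing the scale factor $\lambda$ so as to reduce the two free arguments $t,s>0$ to a single ratio. Starting from the identity $E(X(t_1)X^*(t_2))=\lambda^{-2H}E(X(\lambda t_1)X^*(\lambda t_2))$, which holds for all $t_1,t_2,\lambda>0$, I would set $t_1=t$, $t_2=s$ and pick $\lambda=1/\sqrt{ts}$. This substitution sends $(t,s)$ to $(\sqrt{t/s},\sqrt{s/t})$, whose product is $1$, so we obtain
$$
R_X(t,s)=\lambda^{-2H}R_X(\lambda t,\lambda s)=(ts)^{H}\,R_X\!\bigl(\sqrt{t/s},\sqrt{s/t}\bigr).
$$
Now define $C_X(u):=R_X(\sqrt{u},1/\sqrt{u})$ for $u>0$; then the right-hand side is exactly $(ts)^H C_X(t/s)$, which is the claimed form (\ref{1'}). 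A short check that $C_X$ is well-defined independently of the representation of the ratio, and that the algebra of exponents is consistent, completes the derivation of the functional form.

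It then remains to show $C_X(\cdot)\in NND(\mathbbm{R}^+)$. This is where the main work lies. Since $X$ is (wide-sense) self-similar and in particular has finite second moments by condition (2), $R_X$ is a genuine covariance function, hence $R_X\in NND(\mathbbm{R}^2)$ in the sense of (\ref{1.8}): for any points $u_1,\dots,u_n>0$ and scalars $z_1,\dots,z_n\in\mathbbm{C}$ we have $\sum_{j,k}R_X(u_j,u_k)z_jz_k^*\ge 0$. I would apply this with the special points $u_j=\sqrt{w_j}$ for arbitrary $w_j>0$, giving
$$
0\le \sum_{j,k=1}^n R_X(\sqrt{w_j},\sqrt{w_k})\,z_j z_k^*
=\sum_{j,k=1}^n (w_j w_k)^{H/2}\,C_X\!\Bigl(\tfrac{w_j}{w_k}\Bigr)\,z_j z_k^* .
$$
Absorbing the factor $(w_j w_k)^{H/2}=w_j^{H/2}\,w_k^{H/2}$ into the scalars, i.e. replacing $z_j$ by $\zeta_j:=w_j^{H/2}z_j$ (a bijection on $\mathbbm{C}^n$ for fixed positive $w_j$), the sum becomes $\sum_{j,k}C_X(w_j/w_k)\,\zeta_j\zeta_k^*\ge 0$. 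Since the $w_j>0$ and $\zeta_j\in\mathbbm{C}$ are arbitrary, this is precisely the statement that $C_X$ is nonnegative definite on the multiplicative group $\mathbbm{R}^+$ (equivalently, $C_X(e^{\,\cdot})$ is nonnegative definite on $\mathbbm{R}$ in the usual additive sense).

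The one genuine obstacle is the role of the exponent $H$ when $H\ne 0$: the factor $(w_jw_k)^{H/2}$ is not of the form $\phi(w_j/w_k)$, so it cannot simply be folded into $C_X$; the point is that it \emph{factors} as $w_j^{H/2}w_k^{H/2}$ and is therefore a harmless diagonal congruence of the Gram matrix, which preserves nonnegative definiteness. I would state this rescaling step carefully (it is the only place where we use $w_j>0$ rather than $w_j\in\mathbbm{R}$), and note that the continuity/boundedness hypotheses needed to pass between $NND$ and $WNND$ are inherited from the mean-square continuity of $X$ as recorded in the preliminaries, so no separate regularity argument is required. Everything else is bookkeeping with the change of variables $u\mapsto\ln u$.
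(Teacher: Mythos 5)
Your argument is essentially correct, and it supplies a self-contained derivation where the paper itself gives none: the paper's ``proof'' of this proposition is simply a citation to Flandrin, so your write-up is the standard argument (choose $\lambda=1/\sqrt{ts}$ in the wide-sense scaling relation, define $C_X(u):=R_X(\sqrt{u},1/\sqrt{u})$, and transfer nonnegative definiteness from $R_X$ to $C_X$ by a diagonal congruence of the Gram matrix). The only blemish is an algebraic slip in the NND step: with your choice $u_j=\sqrt{w_j}$ one has
\begin{equation*}
R_X(\sqrt{w_j},\sqrt{w_k})=(w_jw_k)^{H/2}\,C_X\bigl(\sqrt{w_j/w_k}\bigr),
\end{equation*}
not $(w_jw_k)^{H/2}C_X(w_j/w_k)$ as displayed. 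This is harmless and is repaired by taking the test points to be $w_j$ themselves: then $\sum_{j,k}R_X(w_j,w_k)z_jz_k^*=\sum_{j,k}(w_jw_k)^{H}C_X(w_j/w_k)z_jz_k^*\ge 0$, and absorbing $w_j^{H}$ into $z_j$ gives exactly the multiplicative nonnegative definiteness of $C_X$ on $\mathbbm{R}^+$, which is what the statement asserts. Your closing remark about passing between $NND$ and $WNND$ is unnecessary here, since only the pointwise definition (\ref{1.8}) is used; no regularity of $R_X$ is needed for this proposition.
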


\begin{proof}
See Flandrin \cite{fla3}.
\end{proof}

In the case of self-similar processes, the Mellin transform plays, with respect to scaling, a role similar to that played by the Fourier transform with respect to shifting \cite{fla3}. The Mellin transform \cite{fla}, \cite{desana} of a function $g(t)$ is defined by
\begin{equation}
{\check{\mathcal{G}}}(s):=(\mathcal{M}g)(s)=\int_0^\infty g(t) t^{-s-1}dt,\label{mellin}
\end{equation}
where $s=\beta+i \omega$, $\beta, \omega \in \mathbbm{R}$.

For a function $g$ of several variables we denote partial Mellin transform with respect to variables indexed by $j , k$, by $\mathcal{M}_{j,k}g$.

\subsection{Harmonizable, Multiplicative Harmonizable, and Circularly Symmetric Gaussian processes}
We present here some definitions about multiplicative harmonizable processes from Flandrin et {\it{al}} \cite{fla}, \cite{fla1}, \cite{fla3}. A process $\{X(t), t>0\}$ is called multiplicative harmonizable if its covariance function has the representation
\begin{equation}
R_X(t,s)=\int_{-\infty}^\infty \int_{-\infty}^\infty t^{H+i2\pi\beta} s^{H-i2\pi\sigma} m(d\beta,d\sigma), \label{2'}
\end{equation}
where $m$ is a measure on $\mathbbm{R}^2$ of bounded total variation, and is called the Mellin spectral distribution function. Furthermore, this spectral distribution function satisfies
$$m(\beta,\sigma)=\int_0^\infty \int_0^\infty t^{-H-i2\pi\beta-1} s^{-H+i2\pi\sigma-1} R_X(t,s) dt ds.$$
A necessary and sufficient condition for (\ref{2'}) to hold is that
$\int_{-\infty}^\infty \int_{-\infty}^\infty|m(d\beta,d\sigma)|<\infty$, which is adopted from Loeve's condition for harmonizability \cite{loeve}. A multiplicative harmonizable process $X$ admits a spectral representation on a Mellin basis as:
\begin{equation}
X(t)=\int_{-\infty}^\infty t^{H+i2\pi\beta} d\widetilde{X}(\beta),\label{multi}
\end{equation}
where $\widetilde{X}$ is an $L^2(\mathbbm{P})$-valued measure, called the spectral measure or spectral process. The connection between $\widetilde{X}$ and $m$ is:
$$E(d\widetilde{X}(\beta)d{\widetilde{X}^*(\sigma)})=m(d\beta,d\sigma).$$
For multiplicative harmonizable H-ss  processes, $m$ has support on the diagonal, and it is a nonnegative bounded measure. Therefore, there exists a non-negative bounded measure $\widetilde{C}$ such that
\begin{equation}
C_X(t/s)=\int_{-\infty}^\infty (t/s)^{i2\pi\beta}\widetilde{C}(d\beta),\label{multihss}
\end{equation}
since, by (\ref{multi}) and $m(d\beta, d\sigma)=\bigg{\{}^{\widetilde{C}(d\beta) \;\;\; \beta=\sigma}_{0\;\;\;\;\;\;\;\;\;\;\beta\neq\sigma }$
$$E(X(t)X^*(s))=\int_{-\infty}^\infty \int_{-\infty}^\infty t^{H+i2\pi\beta} s^{H-i2\pi\sigma} E(d\widetilde{X}(\beta)d\widetilde{X}^*(\sigma))$$
$$=(ts)^H \int_{-\infty}^\infty (t/s)^{i2\pi\beta}\widetilde{C}(d\beta),\quad $$
\noindent by comparison to (\ref{1'}), Eq (\ref{multihss}) is achieved.

\begin{defi}
For a circularly symmetric or proper process \cite{pic}, \cite{sch} $X(t)$, the processes
$$\{e^{i\theta}X(t)\}_{\theta \in [0,2\pi)}$$
\noindent are identically distributed for all ${\theta \in [0,2\pi)}$.

According to Grettenberg's theorem \cite{rao}, for a circularly symmetric process, $E(X(t))\equiv 0$ and $E(X(t)X(s))=0 \quad  \forall(t,s)\in \mathbbm{R}^2$ \cite{miller}.
\end{defi}

Let $X_1, \cdots , X_n$ be complex-valued zero-mean jointly Gaussian stochastic variables. Then, according to Wick's theorem \cite{wick},
\begin{equation}
E(X_1 \cdots  X_n)=\sum \prod E(X_{i_k}X_{j_k}),\label{2.8}
\end{equation}
where the sum is over all partitions of $\{1, \cdots , n\}$ into disjoint pairs $\{i_k,j_k\}$. Thus for $n=4$ we have
\begin{equation}
E(X_1X_2X_3X_4)=E(X_1X_2)E(X_3X_4)+E(X_1X_3)E(X_2X_4)+E(X_1X_4)E(X_2X_3).\label{2.9}
\end{equation}
The formula (\ref{2.9}) sometimes called $\mathit{Isserlis' theorem}$ is valid for any zero mean complex-valued Gaussian stochastic variables $X_1, X_2, X_3, X_4$ \cite{hel}.

\noindent The following fourth-order moment function will be needed later on.
\begin{remark}
Let
\begin{equation}
k(t_1, \tau_1, t_2, \tau_2):=E\{X(t_1\sqrt{\tau_1})X^*(t_1/\sqrt{\tau_1})X^*(t_2\sqrt{\tau_2})X(t_2/\sqrt{\tau_2})\}.
\end{equation}
\noindent Then, from (\ref{2.9}), for real-valued Gaussian processes, we have that
$$k(t_1, \tau_1, t_2, \tau_2)=R_X(t_1\sqrt{\tau_1},t_1/\sqrt{\tau_1})R_X(t_2\sqrt{\tau_2} ,t_2/\sqrt{\tau_2})$$
$$+R_X(t_1\sqrt{\tau_1},t_2\sqrt{\tau_2})R_X(t_1/\sqrt{\tau_1},t_2/\sqrt{\tau_2})$$
\begin{equation}
+R_X(t_1\sqrt{\tau_1},t_2/\sqrt{\tau_2})R_X(t_1/\sqrt{\tau_1},t_2\sqrt{\tau_2}),\label{8}
\end{equation}
and for circularly symmetric Gaussian processes,
$$k(t_1, \tau_1, t_2, \tau_2)=R_X(t_1\sqrt{\tau_1},t_1/\sqrt{\tau_1})R^*_X(t_2\sqrt{\tau_2} ,t_2/\sqrt{\tau_2})$$
\begin{equation}
+R_X(t_1\sqrt{\tau_1},t_2\sqrt{\tau_2})R^*_X(t_1/\sqrt{\tau_1},t_2/\sqrt{\tau_2})\label{9}.
\end{equation}
\end{remark}

\section{Locally Self-Similar Processes}
Locally self-similar processes (LSSPs) in Silverman's sense \cite{silverman}, were introduced by Flandrin \cite{fla3} as an extension of H-ss processes. In this Section, we study properties of LSSPs in time and time-frequency domain.

\begin{defi}
A locally self-similar process (LSSP) \cite{fla}, \cite{fla3}, is a complex-valued stochastic process $\{X(t), t>0\}$ whose covariance function $R_X(t,s)=E(X(t)X^*(s))$, has the form
\begin{equation}
R_X(t,s)=(ts)^H q(\ln \sqrt{ts}) C_X(\frac{t}{s})\label{1''}
\end{equation}
where $q$ and $C_X$ are complex-valued functions, $q$ must have a constant sign which we assume positive, and $C_X \in  NND(\mathbbm{R}^+)$. If we define $Q(t):=t^{2H}q(\ln t)$, and the coordinate transformation on $\mathbbm{R}^{+2}$ as
\begin{equation}
k(t,s):=(t\sqrt{s},t/\sqrt{s})\Longleftrightarrow k^{-1}(t,s)=(\sqrt{ts}, \frac{t}{s}),\label{k}
\end{equation}
then (\ref{1''}) can be written as
\begin{equation}
R_X(t,s)=Q(\sqrt{ts})C_X(\frac{t}{s}), \label{rxk}
\end{equation}
we may write this relation as
$$R_X(t,s)=Q \varotimes C_X \circ k^{-1}(t,s).$$
If $R_X \in NND(\mathbbm{R}^{+2})$ and $R_X=Q \varotimes C_X \circ k^{-1}$, we write $(Q,C_X)\in LSS(\mathbbm{R}^+)$.
\end{defi}

\noindent We will always understand that the stochastic process is nonzero, i.e., $Q, C_X \neq 0$. Let $(Q,C_X)\in LSS(\mathbbm{R}^+)$, and let $X(t)$ be a locally self-similar process with covariance function $Q \varotimes C_X \circ k^{-1}$. Then $$Q(t)C_X(1)=E|X(t)|^2$$
\noindent describes the variation of the process over time, and $$C_X(\tau)/C_X(1)=E(X(t\sqrt{\tau})X^*(t/\sqrt{\tau}))\big{/}E|X(t)|^2,$$
\noindent describes the "local covariance" (at time $t$, for any $t \in \mathbbm{R}^+$ such that $E|X(t)|^2\neq 0$ ), of the stochastic process $X(t)$. Since $X$ is assumed to be nonzero, we have $C_X(1) \neq 0$, and we may without loss assume that
\begin{equation}
C_X(1)=1 \quad for \quad (Q,C_X)\in LSS(\mathbbm{R}^+). \label{2.2}
\end{equation}
Thus, $Q(t)\geq 0$ for all $t \in \mathbbm{R}^+.$\\

We also assume that $Q, C_X$ are continuous. Continuity of $Q$ and $C_X$ is equivalent to mean-square continuity of the process $X(t)$. In fact, everywhere continuity of $R_X$ is implied by continuity on the diagonal \cite{loeve}. Thus, if $Q$ is continuous everywhere and $C_X$ is continuous in $1$, then $R_X$ is continuous everywhere, and the process is mean-square continuous.\\

The Cauchy-Schwarz inequality for the Hilbert space $L^2(\Omega)$ gives $$|R_X(t,s)|^2\leq R_X(t,t)R_X(s,s),$$
 and is a necessary condition for a function $R_X$ to be the covariance function of a stochastic process \cite{loeve}. If
$(Q,C_X)\in LSS(\mathbbm{R}^+)$, we have due to the normalization (\ref{2.2}) which implies $Q \geq 0$,

$$\qquad\qquad\qquad Q^2(t)|C_X(\tau)|^2\leq Q(t\sqrt{\tau})Q(t/\sqrt{\tau}), \qquad t, \tau \in \mathbbm{R}^+.$$

\noindent By restriction to $t=1$, we obtain the inequality
$Q^2(1)|C_X(\tau^2)|^2\leq Q(\tau)Q(1/\tau)$, or equivalently
\begin{equation}
Q^2(1)|C_X(\tau^2)|^2\leq Q(\tau)Q(1/\tau),\qquad
\end{equation}
which says that $C_X$ is necessarily bounded in terms of $Q(\tau)Q(1/\tau)$.\\

We are interested in conditions on $Q$ and $C_X$ that are necessary and sufficient for $(Q,C_X)\in LSS(\mathcal{S}(\mathbbm{R}^+))$, which is equivalent to $(Q, C_X)\in LSS(\mathbbm{R}^+)$ when $Q$ and $C_X$ are continuous and polynomially bounded. The following simple lemma expresses the definition $(Q,C_X)\in LSS(\mathbbm{R}^+)$ in terms of time-frequency analysis.
\begin{lemma}
If $Q,C_X \in \mathcal{S}(\mathbbm{R}^+)$ then
\begin{equation}
(Q\varotimes C_X) \in LSS(\mathcal{S}(\mathbbm{R}^+)) \Longleftrightarrow \int_0^\infty \int_{-\infty}^\infty t Q(t) \breve{C}_X(\xi) W_f(t/t_0,\xi-\xi_0) dt d\xi \geq 0, \quad f \in \mathcal{S}(\mathbbm{R}^+)\label{lem}
\end{equation}
where $W_f(t,\xi)=\int_0^\infty f(t\sqrt{\tau})f^*(t/\sqrt{\tau})\tau^{-i2\pi\xi-1}d\tau$, is the scale invariant Wigner distribution. 
\end{lemma}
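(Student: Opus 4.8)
The plan is to unwind the definition of $(Q\varotimes C_X)\in LSS(\mathcal{S}(\mathbbm{R}^+))$ into a positivity condition and then transport it, by the coordinate change $k$ of (\ref{k}) together with the Mellin--Plancherel formula, into the stated positivity of the pairing of $Q\varotimes\breve{C}_X$ against scale-invariant Wigner distributions. By the definition (\ref{pre}) of $NND(\mathcal{S}(\mathbbm{R}^2))$, the left-hand side of (\ref{lem}) says exactly that
\[
\int\int_{\mathbbm{R}^{+2}} R_X(t,s)\,\vartheta(t)\,\vartheta^*(s)\,dt\,ds\;\ge\;0\qquad\text{for every }\vartheta\in\mathcal{S}(\mathbbm{R}^+),
\]
where $R_X=Q\varotimes C_X\circ k^{-1}$. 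So it suffices to prove the \emph{identity}
\[
\int\int_{\mathbbm{R}^{+2}} R_X(t,s)\,\vartheta(t)\,\vartheta^*(s)\,dt\,ds\;=\;\int_0^\infty\!\!\int_{-\infty}^\infty t\,Q(t)\,\breve{C}_X(\xi)\,W_\vartheta(t,\xi)\,dt\,d\xi ,
\]
from which both implications of (\ref{lem}) follow at once, once the parameters $t_0,\xi_0$ are accounted for.

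To prove the identity, first change variables from $(t,s)$ to $(t_0,\tau)$ through $k$, i.e. $t=t_0\sqrt{\tau}$, $s=t_0/\sqrt{\tau}$; this is a diffeomorphism of $\mathbbm{R}^{+2}$ onto itself with $|\partial(t,s)/\partial(t_0,\tau)|=t_0/\tau$, and by (\ref{rxk}) one has $R_X(t_0\sqrt\tau,t_0/\sqrt\tau)=Q(t_0)C_X(\tau)$, so the left-hand side becomes $\int_0^\infty t_0\,Q(t_0)\big[\int_0^\infty C_X(\tau)\,\vartheta(t_0\sqrt\tau)\,\vartheta^*(t_0/\sqrt\tau)\,\tau^{-1}\,d\tau\big]\,dt_0$. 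For the bracketed integral, observe that $\tau\mapsto g_{t_0}(\tau):=\vartheta(t_0\sqrt\tau)\vartheta^*(t_0/\sqrt\tau)$ is of Schwartz class in the variable $\ln\tau$, that $W_\vartheta(t_0,\cdot)$ is by definition the Mellin transform of $g_{t_0}$ evaluated on the imaginary axis, and that Mellin inversion there --- Fourier inversion after the substitution $\tau=e^u$ --- gives $g_{t_0}(\tau)=\int_{-\infty}^\infty W_\vartheta(t_0,\xi)\,\tau^{i2\pi\xi}\,d\xi$. Inserting this and interchanging the order of integration converts the bracketed integral into $\int_{-\infty}^\infty \breve{C}_X(\xi)\,W_\vartheta(t_0,\xi)\,d\xi$, with $\breve{C}_X$ the Mellin transform of $C_X$ on the imaginary axis (up to the reflection $\xi\mapsto-\xi$ discussed below); this proves the identity.

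The parameters $t_0,\xi_0$ are disposed of using the covariance properties of the scale-invariant Wigner distribution: a direct computation from the definition of $W$ shows that if $g(t):=t^{i2\pi\xi_0}f(t/t_0)$ then $W_g(t,\xi)=W_f(t/t_0,\xi-\xi_0)$ --- dilation by $t_0$ translates the scale variable and multiplication by the Mellin character $t^{i2\pi\xi_0}$ translates the frequency variable --- and for each fixed $t_0>0,\ \xi_0\in\mathbbm{R}$ the map $f\mapsto g$ is a bijection of $\mathcal{S}(\mathbbm{R}^+)$ onto itself. Hence requiring the right-hand integral of (\ref{lem}) to be nonnegative for all $f$ is the same as requiring the right-hand side of the identity to be nonnegative for all $\vartheta$, and combining this with the reformulation of the left-hand side of (\ref{lem}) gives the equivalence in both directions.

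I expect the main obstacle to be the analytic justification of the Mellin step: one must check that $Q,C_X,f\in\mathcal{S}(\mathbbm{R}^+)$ force $g_{t_0}$ to decay rapidly as $\tau\to0^+$ and as $\tau\to\infty$, that $W_\vartheta(t_0,\cdot)$ is then rapidly decreasing, and that the double integral over $(t,\xi)$ converges absolutely, so that the interchange of integrals is legitimate. A secondary point is the bookkeeping of signs --- the orientation of the Jacobian of $k$ and the sign conventions in the Mellin transforms defining $\breve{C}_X$ and $W_f$ --- which produces, in the form obtained directly, a reflection $\xi\mapsto-\xi$ in the frequency variable; since $W_f(t,-\xi)=W_{f^*}(t,\xi)$ and $f\mapsto f^*$ is a bijection of $\mathcal{S}(\mathbbm{R}^+)$ (and one quantifies over all $\xi_0$), this reflection is immaterial to the equivalence in (\ref{lem}).
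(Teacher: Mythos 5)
Your argument is correct and follows essentially the same route as the paper's own proof: unwind $LSS(\mathcal{S}(\mathbbm{R}^+))$ into positivity of the pairing $(Q\varotimes C_X\circ k^{-1},\vartheta\varotimes\vartheta^*)$, pass to the variables $(t,\tau)$ via $k$, use Mellin/Fourier inversion to turn the inner integral into $\int_{-\infty}^{\infty}\breve{C}_X(\xi)W_\vartheta(t,\xi)\,d\xi$, and dispose of $(t_0,\xi_0)$ through the covariance $W_{t^{i2\pi\xi_0}f(\cdot/t_0)}(t,\xi)=W_f(t/t_0,\xi-\xi_0)$ together with the bijectivity of $f\mapsto t^{i2\pi\xi_0}f(t/t_0)$ on $\mathcal{S}(\mathbbm{R}^+)$. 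Your added care about absolute convergence and the $\xi\mapsto-\xi$ reflection (which the paper absorbs by invoking the real-valuedness of $W_\vartheta$) is a harmless refinement, not a different method.
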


\begin{proof}
By $(Q,C_X)\in LSS(\mathcal{S}(\mathbbm{R}^+))$, we mean $R_X=Q\varotimes C_x \circ k^{-1} \in NND(\mathbbm{R}^{+2})$ with continuous and polynomially bounded functions $Q$ and $C_X$. Then, by (\ref{pre}), $(Q\varotimes C_x \circ k^{-1},\vartheta \varotimes \vartheta^*)\geq 0,\; \forall \vartheta \in \mathcal{S}(\mathbbm{R}^+)$. Thus,

$$(Q\varotimes C_X \circ k^{-1},\vartheta \varotimes \vartheta^*)=\int_0^\infty\int_0^\infty Q\varotimes C_X \circ k^{-1}(u,v)\;\vartheta^* \varotimes \vartheta(u,v)du dv\qquad$$

$$\qquad\qquad\qquad=\int_0^\infty\int_0^\infty Q(\sqrt{uv})C_X(u/v)\vartheta^*(u) \vartheta(v) du dv $$

$$\qquad\qquad\qquad\qquad\;\;\;\;\;\qquad\qquad\;\;=\int_0^\infty \int_{-\infty}^\infty t Q(t) \breve{C}_X(\xi)\{\int_0^\infty \vartheta^*(t\sqrt{\tau}) \vartheta(t/\sqrt{\tau}) \tau^{i\xi-1} d\tau\} d\xi dt\geq 0$$
where $t:=\sqrt{uv}, \tau:=u/v$ and $\breve{C}_X:=\mathcal{M}^{-1}C_X$. By  real-valuedness of $W_\vartheta$, we have that
$$(Q\varotimes C_x \circ k^{-1},\vartheta \varotimes \vartheta^* )=\int_0^\infty \int_{-\infty}^\infty t Q(t) \breve{C}_X(\xi)W_\vartheta(t,\xi)d\xi dt\geq 0.$$

\noindent Let $f \in \mathcal{S}(\mathbbm{R}^+)$ and set $\vartheta(t):=t^{i2\pi\xi_0}f(\frac{t}{t_0}) \in \mathcal{S}(\mathbbm{R}^+)$, then $W_\vartheta(t, \xi)=W_f(t/t_0, \xi-\xi_0)$, and (\ref{lem}) is satisfied.

$\hspace{6in} \boxempty$
\end{proof}

\subsection{Multiplicative harmonizable locally self-similar processes}
In the remainder of this section, we study locally self-similar processes that are multiplicative harmonizable. Let there exists a bounded measure $\widetilde{C}$ such that (\ref{multihss}) holds, and also there exists a bounded measure $\widetilde{Q}$ such that $Q(t)=\int_{-\infty}^\infty t^{2H+i2\pi\xi}\widetilde{Q}(d\xi)$. Then for a LSSP, (\ref{2'}) takes the form
$$R_X(t,s)=Q(\sqrt{ts}) C_X(t/s)=\int_{-\infty}^\infty \int_{-\infty}^\infty  (\sqrt{ts})^{2H+i2\pi\xi} (t/s)^{i2\pi\omega}\widetilde{Q}(d\xi) \widetilde{C}(d\omega)\qquad\qquad$$
$$\qquad\qquad\qquad\qquad\qquad=\int_{-\infty}^\infty \int_{-\infty}^\infty t^{H+i2\pi(\omega+\xi/2)}s^{H-i2\pi(\omega-\xi/2)}\widetilde{Q}(d\xi) \widetilde{C}(d\omega).$$
\noindent By the change of variables $\beta:=\omega+\xi/2$ and $\sigma:=\omega-\xi/2$, we have that
$$R_X(t,s)=\int_{-\infty}^\infty \int_{-\infty}^\infty t^{H+i2\pi\beta} s^{H-i2\pi\sigma} \widetilde{Q}(\beta-\sigma) \widetilde{C}(\frac{\beta+\sigma}{2})d\beta d\sigma.$$
By comparison to (\ref{2'}), it follows that the spectral distribution function, $m$, can be written as a product of bounded measures $\widetilde{C}$ and $\widetilde{Q}$ :
$$m(\beta,\sigma)= \widetilde{Q}(\beta-\sigma) \widetilde{C}(\frac{\beta+\sigma}{2}).$$

\section{The Scale Invariant Wigner Spectrum Estimation}
 In this section, we study the SIWS estimation of a zero mean real-valued, and complex-valued Gaussian LSSP defined on a positive half of a continuous time axis. As the class of estimators, we use the Cohen's Class Counterpart \cite{fla5} of time-frequency representations (TFRs) for scale invariant signals. The class of estimators, closely parallels the conventional Cohen's class \cite{cohen} and shares with it some of its most interesting properties, namely those concerning the usefulness and versatility of distributions associated to separable smoothing functions \cite{fla5}. We study estimators in this class that are optimal in the sense of giving minimum mean-square error (MMSE).

\subsection{The Scale Invariant Wigner Spectrum}
Given a scale invariant signal $x$ defined on $\mathbbm{R}^+$, the scale invariant Wigner distribution (SIWD)  \cite{altes},  \cite{marinovic}, is defined by
\begin{equation}
W_{x}(t,\xi)=\int_0^\infty x(t\sqrt{\tau})x^*(t/\sqrt{\tau})\tau^{-i2\pi\xi-1}d\tau\label{wx}
\end{equation}
$$=\mathcal{M}_2(x\varotimes x^*\circ k)(t,\tau),\;\;\;$$
which can be interpreted as a distribution of the signal's energy over the time-frequency domain $(t,\xi)\in \mathbbm{R}^+ \times \mathbbm{R}$. However, this interpretation is bothered by the fact that $W_x$ is rarely non-negative everywhere. As a remedy, one can smooth $W_x$ with a kernel according to $W_x \varodot \Phi$, which is called a time-frequency representation in  Cohen's class counterpart determined by $P_x$ \cite{fla5}, \cite{wal3}.

If $X$ is a stochastic process, then $W_x$ is called the scale invariant Wigner process, and is denoted by $W_X$. In this case, the integral (\ref{wx}) is a stochastic mean-square Riemann integral \cite{loeve}; and the  scale invariant Wigner spectrum (SIWS) is defined by the expectation of $W_X$:
\begin{equation}
W_{E,X}(t,\xi)=E\{\int_0^\infty X(t\sqrt{\tau})X^*(t/\sqrt{\tau})\tau^{-i2\pi\xi-1}d\tau\} \label{wvs}
\end{equation}
$$\qquad=\int_0^\infty R_X(t\sqrt{\tau},t/\sqrt{\tau})\tau^{-i2\pi\xi-1}d\tau$$
$$=(\mathcal{M}_{\tau}R_{X})(i2\pi\xi)\quad\qquad\quad\;\;\;$$
where $R_X$ is the covariance function of a process $\{X(t), t>0\}$ \cite{fla}, \cite{fla5}.

\begin{pro}
In the case of multiplicative harmonizable processes, the SIWS can be equivalently expressed in terms of  the spectral covariance function, $m(\beta,\sigma)$, as
\begin{equation}
W_{E,X}(t,\xi)=\int_{-\infty}^\infty t^{2H+i2\pi\theta} m(\xi+\theta/2,\xi-\theta/2) d\theta.\label{wteta}
\end{equation}
\end{pro}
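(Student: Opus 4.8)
The plan is to substitute the multiplicative harmonizable representation (\ref{2'}) of the covariance function directly into the defining integral (\ref{wvs}) of the SIWS and then to perform the inner $\tau$-integration, which is nothing but a Mellin inversion along the critical line. First I would write
$$R_X(t\sqrt{\tau},t/\sqrt{\tau})=\int_{-\infty}^\infty\int_{-\infty}^\infty (t\sqrt{\tau})^{H+i2\pi\beta}(t/\sqrt{\tau})^{H-i2\pi\sigma}\,m(d\beta,d\sigma),$$
and collect the powers of $t$ and of $\tau$ separately: the $t$-factors combine to $t^{2H+i2\pi(\beta-\sigma)}$ and the $\tau$-factors combine to $\tau^{i\pi(\beta+\sigma)}$. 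Inserting this into (\ref{wvs}) and grouping with the weight $\tau^{-i2\pi\xi-1}$ gives the iterated integral
$$W_{E,X}(t,\xi)=\int_0^\infty\int_{-\infty}^\infty\int_{-\infty}^\infty t^{2H+i2\pi(\beta-\sigma)}\,\tau^{\,i\pi(\beta+\sigma-2\xi)-1}\,m(d\beta,d\sigma)\,d\tau .$$

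Next I would interchange the $\tau$-integral with the $m$-integral and evaluate the former. This is the step that needs the most care and is the genuine obstacle: the interchange is justified by the bounded total variation of $m$, i.e. $\int\int|m(d\beta,d\sigma)|<\infty$ (the Loève condition recalled right after (\ref{2'})), together with reading the inner integral as a tempered distribution. Concretely, the substitution $\tau=e^{u}$ turns $\int_0^\infty\tau^{\,i\pi(\beta+\sigma-2\xi)-1}\,d\tau$ into $\int_{-\infty}^\infty e^{\,iu\pi(\beta+\sigma-2\xi)}\,du=2\pi\,\delta\big(\pi(\beta+\sigma-2\xi)\big)=2\,\delta(\beta+\sigma-2\xi)$; equivalently, this is just the fact that the inverse Mellin transform of the exponential kernel is a Dirac mass on the imaginary axis. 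Consequently the identity (\ref{wteta}) is to be understood in the distributional sense in $(\beta,\sigma)$ — consistent with writing $m$ as a "density" $m(\beta,\sigma)$ — or, if preferred, under the extra assumption that $m$ is absolutely continuous.

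Finally I would change variables from $(\beta,\sigma)$ to $(\theta,s):=(\beta-\sigma,\,\beta+\sigma)$. The corresponding Jacobian contributes a factor $\tfrac12$ that exactly cancels the factor $2$ produced by $\delta(\pi\,\cdot\,)$, and the delta then forces $s=2\xi$, i.e. $\beta=\xi+\theta/2$ and $\sigma=\xi-\theta/2$. What remains is precisely
$$W_{E,X}(t,\xi)=\int_{-\infty}^\infty t^{2H+i2\pi\theta}\,m(\xi+\theta/2,\xi-\theta/2)\,d\theta,$$
which is (\ref{wteta}). Apart from the distributional bookkeeping in the middle step, the whole argument is manipulation of exponents plus one linear change of variables.

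As consistency checks one can note that in the H-ss case, where $m$ is concentrated on the diagonal as in (\ref{multihss}), the formula collapses to $W_{E,X}(t,\xi)=t^{2H}\widetilde C(\xi)$, the scale-invariant analogue of the stationary Wigner spectrum; and for a general LSSP, inserting the factorization $m(\beta,\sigma)=\widetilde Q(\beta-\sigma)\widetilde C\big(\tfrac{\beta+\sigma}{2}\big)$ obtained in Section 3.1 gives $W_{E,X}(t,\xi)=\widetilde C(\xi)\int_{-\infty}^\infty t^{2H+i2\pi\theta}\widetilde Q(\theta)\,d\theta=Q(t)\,\widetilde C(\xi)$, in agreement with $R_X=Q\otimes C_X\circ k^{-1}$ and with a direct evaluation of (\ref{wvs}) for a LSSP.
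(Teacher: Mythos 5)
Your proposal is correct and follows essentially the same route as the paper's Appendix A1: substitute the representation (\ref{2'}) into (\ref{wvs}), collect the powers of $t$ and $\tau$, evaluate the $\tau$-integral as a Dirac delta, and pass to the sum/difference variables. The only differences are cosmetic — you use $s=\beta+\sigma$ instead of the paper's $f=(\beta+\sigma)/2$ (so your factor $2$ from $\delta(\pi\,\cdot)$ cancels against the Jacobian $\tfrac12$, whereas the paper's choice makes the Jacobian $1$ and yields $\delta(\xi-f)$ directly), and you are more explicit than the paper about justifying the interchange of integrals via the bounded total variation of $m$ and about the distributional/density reading of $m(\beta,\sigma)$.
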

\begin{proof}
See A1.
\end{proof}

 The scale invariant ambiguity function (SIAF) \cite{fla5}, can be obtained from the SIWD by Mellin duality:
\begin{equation}
A_x(\theta,\tau)=\int_0^\infty x(t\sqrt{\tau})x^*(t/\sqrt{\tau}) t^{-i2\pi\theta-1}dt\label{ax}
\end{equation}
$$=\mathcal{M}_1\mathcal{M}_2^{-1} W_x(t, \xi),\quad\quad$$

\noindent and for a stochastic process $X$, the Expected scale invariant ambiguity function (ESIAF) is defined by:
$$A_{E,X}(\theta,\tau):=E\{A_X(\theta,\tau)\}=\int_0^\infty R_X(t\sqrt{\tau},t/\sqrt{\tau})t^{-i2\pi\theta-1}dt$$
$$\qquad\qquad\;\;\;\;\;=(\mathcal{M}_tR_X)(i2\pi\theta)$$
\begin{equation}
\qquad \qquad\qquad\;\;\;\;\;=\mathcal{M}_1\mathcal{M}_2^{-1}W_{E,X}(t,\xi).\label{aex}
\end{equation}\\

\noindent For a locally self-similar process, by (\ref{rxk}),
$$R_X(t\sqrt{\tau},t/\sqrt{\tau})=Q(t)C_X(\tau),$$
so by (\ref{wvs}), one can easily verify that the SIWS of a LSSP can be represented by:
$$W_{E,X}(t,\xi)=\int_0^\infty Q(t) C_X(\tau) \tau^{-i2\pi\xi-1} d\tau$$
\begin{equation}
\quad\;\; =Q(t)(\mathcal{M}C_X)(i2\pi\xi),\label{welssp}
\end{equation}

\noindent and the ESIAF for this class is
$$A_{E,X}(\theta,\tau)=\int_0^\infty Q(t) C_X(\tau) t^{-i2\pi \theta -1} dt$$
\begin{equation}
\qquad =C_X(\tau)(\mathcal{M}Q)(i2\pi\theta).\label{aex1}
\end{equation}

\subsection{The Cohen's Class Counterpart}
The Cohen's class counterpart  of bilinear time-frequency representations (TFRs) of scale invariant signals is represented as
\begin{equation}
P_x(t, \xi)=\int_{-\infty}^\infty \int_0^\infty W_x(\frac{t}{s},\xi-\eta)\Phi(s, \eta) \frac{ds}{s} d\eta,\label{fla}
\end{equation}
where $\Phi$ is the 2-D kernel that completely characterized the particular TFR $P_x$, and $W_x$ is the SIWD \cite{fla5}. Eq (\ref{fla}) can equivalently be represented as \cite{fla5}
\begin{equation}
P_x(t,\xi)=\int_{-\infty}^\infty \int_0^\infty A_x(\theta, \tau)\phi(\theta, \tau)t^{i2\pi\theta}\tau^{-i2\pi\xi-1} d\tau d\theta \label{13'}
\end{equation}
$$=\mathcal{M}_1^{-1}\mathcal{M}_2\{A_x(\theta, \tau)\phi(\theta, \tau)\},\quad \;\;\;\;\;\;\;\;$$
where $\phi(\theta, \tau)=\int_{-\infty}^\infty \int_0^\infty \Phi(t,\xi) t^{-i2\pi\theta-1} \tau^{i2\pi\xi} dt d\xi.$
This class, closely parallels the  Cohen's class \cite{cohen}, and it shares with it some of its most interesting properties \cite{fla5}.

If $x$ denotes a realization of a scale invariant process $\{X(t),t>0\}$, then the integrals defined in (\ref{fla}) and (\ref{13'}) become stochastic integrals and will be interpreted as mean-square (m.s.) integrals \cite{loeve}, \cite{say}. We assume that the kernel $\phi$ is chosen such that the existence of the SIWD as a m.s. integral, implies the existence of $P_X$ as a m.s. integral.

\subsection{The SIWS Estimation}
Here, we study optimal kernel estimation of the SIWS, for  Gaussian LSSPs in two cases: (i) the local case in which the kernel $\phi$ is allowed to vary with time and frequency in order to better track the nonstationary structure of the process, and (ii) the global case, where the kernel $\phi$ is assumed to be independent of time and frequency.\\

\subsubsection{The Local Estimation}
Let $\{X(t),t>0\}$ be a zero mean real-valued, or complex-valued circularly symmetric, Gaussian process. Then, (\ref{wx}), (\ref{ax}) and (\ref{13'}) will be the stochastic Riemann integrals, denoted by $W_X$, $A_X$ and $P_X$.
 The stochastic integrals exist for all argument values \cite{loeve}, and are second order stochastic processes \cite{wal3}. $P_X(t,\xi)$, by definition, is a member of Cohen's class counterpart of TFRs.

Now, we consider the local SIWS estimation problem.  We use MMSE estimator where the optimality criterion of the estimation will be the minimization of the mean-square value of the estimation error. So, our objective is finding the optimal kernel $\phi_{opt}^{(t,\xi)}$, which minimizes the mean-square error
\begin{equation}
J(\phi^{(t,\xi)})=E\{\big {|}P_X(t,\xi)-W_{E,X}(t,\xi)\big{|}^2 \}\label{14'}
\end{equation}
\noindent for each $(t,\xi)$. This is a problem of linear MMSE estimation. As we mentioned, in the local case, to better track the nonstationary structure of the process, the kernel may depend on time and frequency, and the superscript $(t,\xi)$ denotes this possible dependence.

\noindent First, note that for each value of $(t,\xi)$, $\{P_X(t,\xi); \phi\in L^2(\mathbbm{R}\times \mathbbm{R}^+)\}$ belongs to a Hilbert space $\mathcal{H}$ of second-order random variables. The scale invariant ambiguity function  $A_X(\theta,\tau)$ generates a subspace $S_X$ of $\mathcal{H}$ as defined by $P_X(t,\xi)$ in (\ref{13'}), ref \cite{say}:
\begin{equation}
S_X=\{P_X(t,\xi): \phi \in L^2(\mathbbm{R}\times \mathbbm{R}^+) \}.\label{37}
\end{equation}
\noindent In the local problem (\ref{14'}), the orthogonal projection of $W_{E,X}(t,\xi)$ onto $\overline{S}_X$ is desired. So,
using the expression for $P_X(t,\xi)$ in (\ref{13'}) and by orthogonality principle, $P_X(t,\xi)$ achieves minimum mean-square error if and only if \cite{say}
\begin{equation}
E\big{\{}\big{[}P_X(t,\xi)-W_{E,X}(t,\xi)\big{]}A^*_X(\theta ',\tau ')\big{\}}=0,\quad \quad for \; all \; (\theta ',\tau '),\label{38}
\end{equation}
where $A^*_X$ is a complex conjugate of $A_X$.

\noindent Expressing $P_X$ as in (\ref{13'}), we can write (\ref{38}) as
$$\int_{-\infty}^\infty \int_0^\infty E\big{\{}A_X(\theta, \tau)A^*_X(\theta ',\tau ')\big{\}}\phi_{opt}^{(t,\xi)}(\theta, \tau)t^{i2\pi\theta}\tau^{-i2\pi\xi-1} d\tau d\theta$$
\begin{equation}
\qquad\qquad\qquad=W_{E,X}(t,\xi)E\big{\{}A^*_X(\theta ',\tau ')\big{\}}, \quad for \; all\; (\theta ',\tau ').\label{39}
\end{equation}
The above linear equation characterizes the locally optimal kernel $\phi_{opt}^{(t,\xi)}$. The linear equation is of the form
\begin{equation}
\mathcal{A}_{(t,\xi)}\phi=W_{E,X}(t,\xi) b,\label{kern}
\end{equation}
where $\mathcal{A}: L^2(\mathbbm{R} \times \mathbbm{R}^+) \rightarrow L^2(\mathbbm{R}^+ \times \mathbbm{R})$ is a linear operator, and $b=EA^*_X \in L^2(\mathbbm{R}\times \mathbbm{R}^+)$. Thus, the solution can be written as
\begin{equation}
\phi_{opt}^{(t,\xi)}=W_{E,X}(t,\xi)\mathcal{A}^{\dag}_{(t,\xi)} b, \label{40}
\end{equation}
where the superscript $\dag$ denotes the pseudo-inverse. By insertion of (\ref{kern}) into (\ref{13'}), we have that
$$P_X(t,\xi)=\int_{-\infty}^\infty \int_0^\infty A_X(\theta, \tau)\phi_{opt}^{(t,\xi)}(\theta, \tau)t^{i2\pi\theta}\tau^{-i2\pi\xi-1} d\tau d\theta$$
$$\qquad\quad \;\;\qquad\qquad \;=W_{E,X}(t,\xi)\int_{-\infty}^\infty \int_0^\infty A_X(\theta, \tau)\{\mathcal{A}^{\dag}_{(t,\xi)} b\} t^{i2\pi\theta}\tau^{-i2\pi\xi-1} d\tau d\theta$$
\begin{equation}
=W_{E,X}(t,\xi)\tilde{P_X}(t,\xi),\qquad \qquad\qquad\qquad \label{41}
\end{equation}
this means that, for each value of $(t,\xi)$, the optimal estimate of $W_{E,X}(t,\xi)$ from a realization of a scale invariant process $X$ is $W_{E,X}(t,\xi)$ itself, multiplied by a TFR of $x$ generated with the kernel $\mathcal{A}^{\dag}_{(t,\xi)} b$.

\subsubsection{The global estimation}
\noindent Now, if we consider $\phi_{opt}$ to be independent of $(t,\xi)$, then by multiplying both sides of (\ref{39}) with $t^{-i2\pi\theta '-1} (\tau ')^{i2\pi\xi}$, and integrating over $(t, \xi)$, we obtain the global optimal ambiguity-domain kernel as:

\begin{equation}
\phi_{opt}(\theta, \tau)=\frac{|A_{E,X}(\theta, \tau)|^2}{E|A_X(\theta, \tau)|^2}I_U(\theta, \tau),\label{15}
\end{equation}
where $I_U(\theta, \tau)$ denotes the indicator function for the open set $$U=\{(\theta, \tau);E|A_X(\theta, \tau)|^2>0\}\subset \mathbbm{R} \times \mathbbm{R}^+.$$

\begin{proof}
 See A2.
\end{proof}

The optimal time-frequency kernel is computed by
\begin{equation}
\Phi_{opt}(t,\xi)=\mathcal{M}_1^{-1}\mathcal{M}_2  \phi_{opt}(\theta, \tau).\label{16}
\end{equation}\\

For circularly symmetric Gaussian processes, using (\ref{9}) and (\ref{aex}) we obtain
\begin{equation}
E|A_X(\theta,\tau)|^2=|A_{E,X}(\theta,\tau)|^2+D_1(\theta,\tau),\label{18}
\end{equation}
where
\begin{equation}
D_1(\theta,\tau)=\int_0^\infty \int_0^\infty R_X(t_1\sqrt{\tau},t_2\sqrt{\tau})R^*_X(t_1/\sqrt{\tau},t_2/\sqrt{\tau})
t_1^{-i2\pi\theta-1}t_2^{i2\pi\theta-1}dt_1 dt_2.\label{19}
\end{equation}

In the case of real-valued processes, using (\ref{8}), we get
\begin{equation}
E|A_X(\theta,\tau)|^2=|A_{E,X}(\theta,\tau)|^2+D_1(\theta,\tau)+D_2(\theta,\tau)\label{20}
\end{equation}
where
\begin{equation}
D_2(\theta,\tau)=\int_0^\infty \int_0^\infty R_X(t_1\sqrt{\tau},t_2/\sqrt{\tau})R_X(t_1/\sqrt{\tau},t_2\sqrt{\tau})
t_1^{-i2\pi\theta-1}t_2^{i2\pi\theta-1}dt_1 dt_2.\label{D2}
\end{equation}
For validity of (\ref{18}) and (\ref{20}), see A3.\\

\begin{pro}
The optimal kernel for a circularly symmetric Gaussian LSSP, is
\begin{equation}
\phi_{opt}(\theta, \tau)=\frac{|C_X(\tau)|^2 |(\mathcal{M}Q)(i2\pi\theta)|^2}{|C_X(\tau)|^2|(\mathcal{M}Q)(i2\pi\theta)|^2 +(\mathcal{M}|C_X|^2)(i2\pi\theta)\;(Q\varoast Q^*)(\tau)},\label{22}
\end{equation}
\noindent where $(Q\varoast Q^*)(\tau):=\int_0^{\infty}Q(u\sqrt{\tau})Q^*(u/\sqrt{\tau})\frac{du}{u}$. To obtain the optimal kernel for a real-valued Gaussian LSSP, we have to take into account the third term $D_2(\theta, \tau)$ into the denominator. It is
\begin{equation}
D_2(\theta, \tau)= A_{C_X}(\theta, \tau^2)\int_0^\infty Q^2(u)\frac{du}{u},\label{d2}
\end{equation}
\noindent where, $A_{C_X}(\theta, \tau^2):=\int_0^\infty C_X(v\tau)C_X(v/\tau) v^{-i2\pi\theta-1} dv$.
\end{pro}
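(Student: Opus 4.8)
The plan is to specialize the global optimal-kernel formula (\ref{15}) to a Gaussian LSSP by evaluating its numerator and denominator from the product structure $R_X(t,s)=Q(\sqrt{ts})C_X(t/s)$ of the covariance. For the numerator I would invoke the ESIAF of a LSSP computed in (\ref{aex1}), namely $A_{E,X}(\theta,\tau)=C_X(\tau)(\mathcal{M}Q)(i2\pi\theta)$, so that $|A_{E,X}(\theta,\tau)|^2=|C_X(\tau)|^2\,|(\mathcal{M}Q)(i2\pi\theta)|^2$, which is exactly the numerator of (\ref{22}). For the denominator, the circularly symmetric identity (\ref{18}) reduces the task to evaluating $D_1(\theta,\tau)$ of (\ref{19}); this is where the real work lies.

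To compute $D_1$, I would substitute $R_X(t,s)=Q(\sqrt{ts})C_X(t/s)$ into (\ref{19}). Using $\sqrt{(t_1\sqrt\tau)(t_2\sqrt\tau)}=\sqrt{t_1t_2}\,\sqrt\tau$, $\sqrt{(t_1/\sqrt\tau)(t_2/\sqrt\tau)}=\sqrt{t_1t_2}/\sqrt\tau$, and $(t_1\sqrt\tau)/(t_2\sqrt\tau)=(t_1/\sqrt\tau)/(t_2/\sqrt\tau)=t_1/t_2$, the integrand becomes $Q(\sqrt{t_1t_2}\,\sqrt\tau)\,Q^*(\sqrt{t_1t_2}/\sqrt\tau)\,|C_X(t_1/t_2)|^2\,t_1^{-i2\pi\theta-1}t_2^{i2\pi\theta-1}$. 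I would then pass to the variables $u=\sqrt{t_1t_2}$, $v=t_1/t_2$ (so $t_1=u\sqrt v$, $t_2=u/\sqrt v$), for which the Jacobian is $|\partial(t_1,t_2)/\partial(u,v)|=u/v$ and, crucially, the exponential weight collapses: $t_1^{-i2\pi\theta-1}t_2^{i2\pi\theta-1}\,dt_1\,dt_2=u^{-1}v^{-i2\pi\theta-1}\,du\,dv$. The double integral then separates (Fubini being legitimate since $Q,C_X$ are continuous and polynomially bounded with the required decay) into $\bigl(\int_0^\infty Q(u\sqrt\tau)Q^*(u/\sqrt\tau)\,du/u\bigr)\bigl(\int_0^\infty|C_X(v)|^2v^{-i2\pi\theta-1}\,dv\bigr)=(Q\varoast Q^*)(\tau)\,(\mathcal{M}|C_X|^2)(i2\pi\theta)$ by the definitions of $\varoast$ and of the Mellin transform (\ref{mellin}). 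Inserting this $D_1$ together with the numerator into (\ref{15}) gives (\ref{22}). For the indicator $I_U$: since $D_1=E|A_X|^2-|A_{E,X}|^2\ge0$, the denominator of (\ref{22}) is $\ge|A_{E,X}|^2$ and vanishes only where the numerator does, so (\ref{22}) coincides with (\ref{15}) under the convention $0/0=0$ on the complement of $U$.

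For the real-valued case one keeps the extra term $D_2(\theta,\tau)$ of (\ref{D2}) in the denominator, as recorded in (\ref{20}), and I would evaluate it by exactly the same substitution. Here $\sqrt{(t_1\sqrt\tau)(t_2/\sqrt\tau)}=\sqrt{(t_1/\sqrt\tau)(t_2\sqrt\tau)}=\sqrt{t_1t_2}$, while $(t_1\sqrt\tau)/(t_2/\sqrt\tau)=(t_1/t_2)\tau$ and $(t_1/\sqrt\tau)/(t_2\sqrt\tau)=(t_1/t_2)/\tau$, so the integrand is $Q^2(\sqrt{t_1t_2})\,C_X((t_1/t_2)\tau)\,C_X((t_1/t_2)/\tau)\,t_1^{-i2\pi\theta-1}t_2^{i2\pi\theta-1}$; with $u=\sqrt{t_1t_2}$, $v=t_1/t_2$ and the same collapse of the weight it separates as $\bigl(\int_0^\infty Q^2(u)\,du/u\bigr)\bigl(\int_0^\infty C_X(v\tau)C_X(v/\tau)v^{-i2\pi\theta-1}\,dv\bigr)$, i.e.\ $D_2(\theta,\tau)=A_{C_X}(\theta,\tau^2)\int_0^\infty Q^2(u)\,du/u$, which is (\ref{d2}).

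The one step that needs genuine care — and which I would present in detail — is the coordinate change $u=\sqrt{t_1t_2}$, $v=t_1/t_2$: computing the Jacobian, verifying the exact cancellation of the powers of $t_1,t_2$ against $u^{-2}v^{-i2\pi\theta}$ and the Jacobian factor $u/v$ so that the weight reduces to $u^{-1}v^{-i2\pi\theta-1}\,du\,dv$, and justifying the interchange of the iterated integrals. Everything else is substitution and recognising the defining integrals of $\mathcal{M}$, $\varoast$ and $A_{C_X}$.
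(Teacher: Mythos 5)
Your proposal is correct and follows essentially the same route as the paper's Appendix A4: insert $R_X=Q(\sqrt{ts})C_X(t/s)$ into (\ref{15}) via (\ref{aex1}), (\ref{18})--(\ref{19}) and (\ref{D2}), change variables to $u=\sqrt{t_1t_2}$, $v=t_1/t_2$ with Jacobian $u/v$, and separate the resulting integrals into $(\mathcal{M}|C_X|^2)(i2\pi\theta)\,(Q\varoast Q^*)(\tau)$ and $A_{C_X}(\theta,\tau^2)\int_0^\infty Q^2(u)\,du/u$. Your added remarks on the Jacobian cancellation and the indicator $I_U$ are consistent refinements of the same argument.
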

\begin{proof}
See A4.
\end{proof}

\section{The Optimal Kernel for Some Generalizations of LSSPs}
In order to generalize the LSSP definition, we introduce two classes of locally self-similar chrip process, and Multicomponent Locally Self-Similar process.

\subsection{Locally Self-Similar Circularly Symmetric Gaussian Chrip Process}
\begin{defi}
A locally self-similar chrip process (LSSCP) is a Gaussian circularly symmetric process with covariance function of the form
\begin{equation}
R_Z(t,s)=Q(\sqrt{ts})C_X(t/s)l_{a,b}(t,s), \label{24}
\end{equation}
where $l_{a,b}$ is defined by
\begin{equation}
l_{a,b}(t,s)=(t/s)^{i a(\ln \sqrt{ts}-b)},\quad \quad a,b\in \mathbbm{R}. \label{25}
\end{equation}
\end{defi}

\noindent The constant $a$ determines the chrip frequency and $b$ the start of the chrip frequency.
\par
If $X(t)$ is a LSSP with covariance $Q\varotimes C_X\circ k^{-1}$, then $Z(t):=X(t)t^{ia(\ln\sqrt{t}-b)}$ i.e., $Z$ equals $X$ times a chrip, has covariance $R_Z$. We have $R_Z(t\sqrt{\tau},t/\sqrt{\tau})=Q(t)C_X(\tau)\tau^{ia(\ln t-b)}$, which shows that $C(\tau)$ is modulated by the chrip $\tau^{ia(\ln t-b)}$.\\

If one multiplies a signal with a chrip according to $f(t):=f_0(t)t^{i(a/2)\ln t}, a\in \mathbbm{R}$, then the SIWD changes to $W(t, \xi)\rightarrow W(t, \xi-\frac{a}{2\pi}\ln t)$:

$$W_f(t,\xi)=\int_0^\infty f(t\sqrt{\tau})f^*(t/\sqrt{\tau})\tau^{-i2\pi\xi-1} d\tau$$

\input{epsf}
\epsfxsize=2.5in \epsfysize=1.5in
\begin{figure}
\vspace{-2.6in}
\centerline{\epsfxsize=8in \epsfysize=6in \epsffile{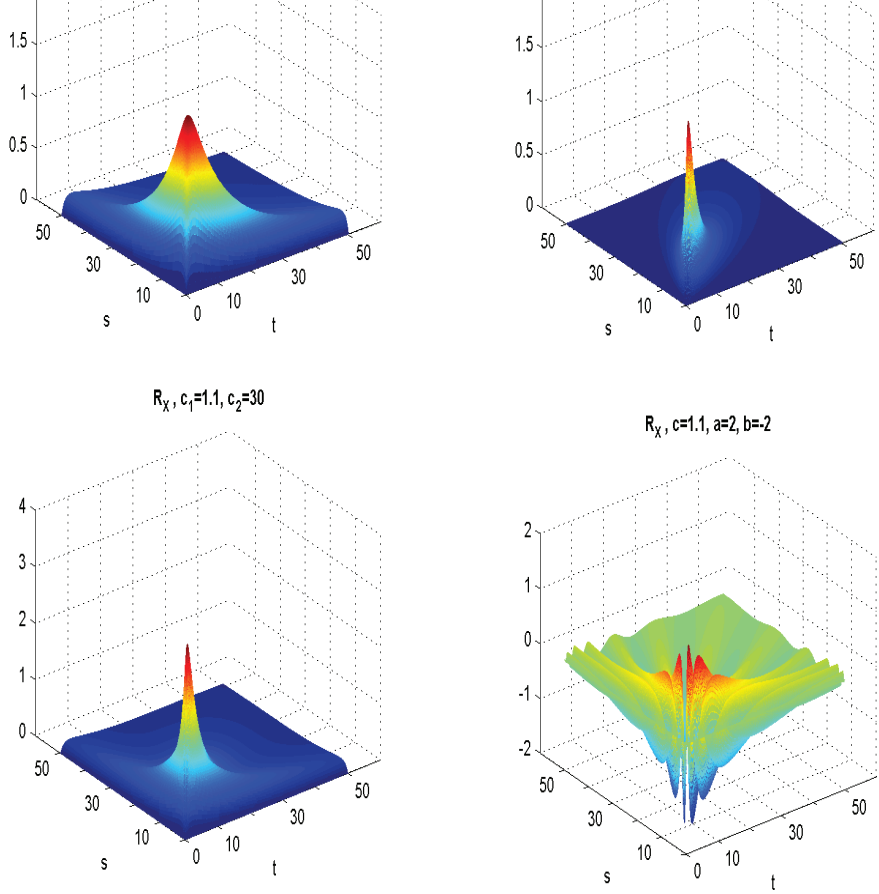}}
\vspace{-0.3in}
\caption{\scriptsize Example of covariance matrices $\bf{R}_X$ of LSSPs with $H=0.5$ and different parameter values. Top left: LSSP: $c=1.1$. Top right: LSSP, $c=30$. Bottom left: MLSSP, $c_1=1.1, c_2=30$. Bottom right: LSSCP, $c=1.1$, $a=2$, $b=-2$. }
\end{figure}

\vspace{2.5in}
\input{epsf}
\epsfxsize=2.5in \epsfysize=1.5in
 \begin{figure}
\vspace{-3in}
\centerline{\epsfxsize=9in \epsfysize=7in \epsffile{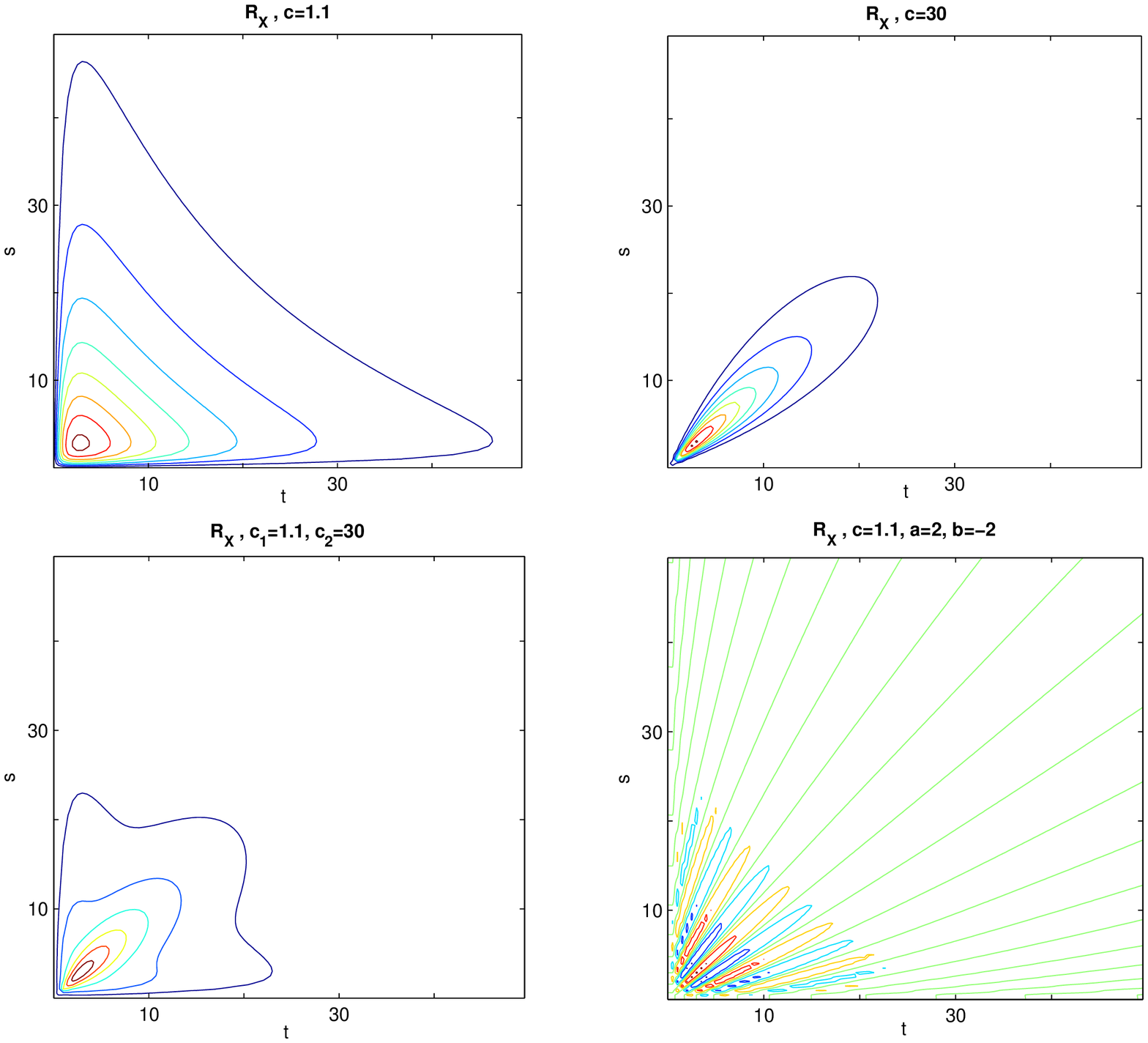}}
\vspace{-0.1in}
\caption{\scriptsize Contour plots of covariance matrices $\bf{R}_X$ of LSSPs for $H=0.5$ and different parameter values (corresponding to Figure 1). Top left: LSSP, $c=1.1$. Top right: LSSP, $c=30$. Bottom left: MLLSP, $c_1=1.1$, $c_2=30$. Bottom right: LSSCP, $c=1.1$, $a=2$, $b=-2$.}
\end{figure}

\vspace{0.5in}
\input{epsf}
\epsfxsize=2.5in \epsfysize=1.5in
 \begin{figure}
\vspace{-2.9in}

\centerline{\epsfxsize=8in \epsfysize=6in \epsffile{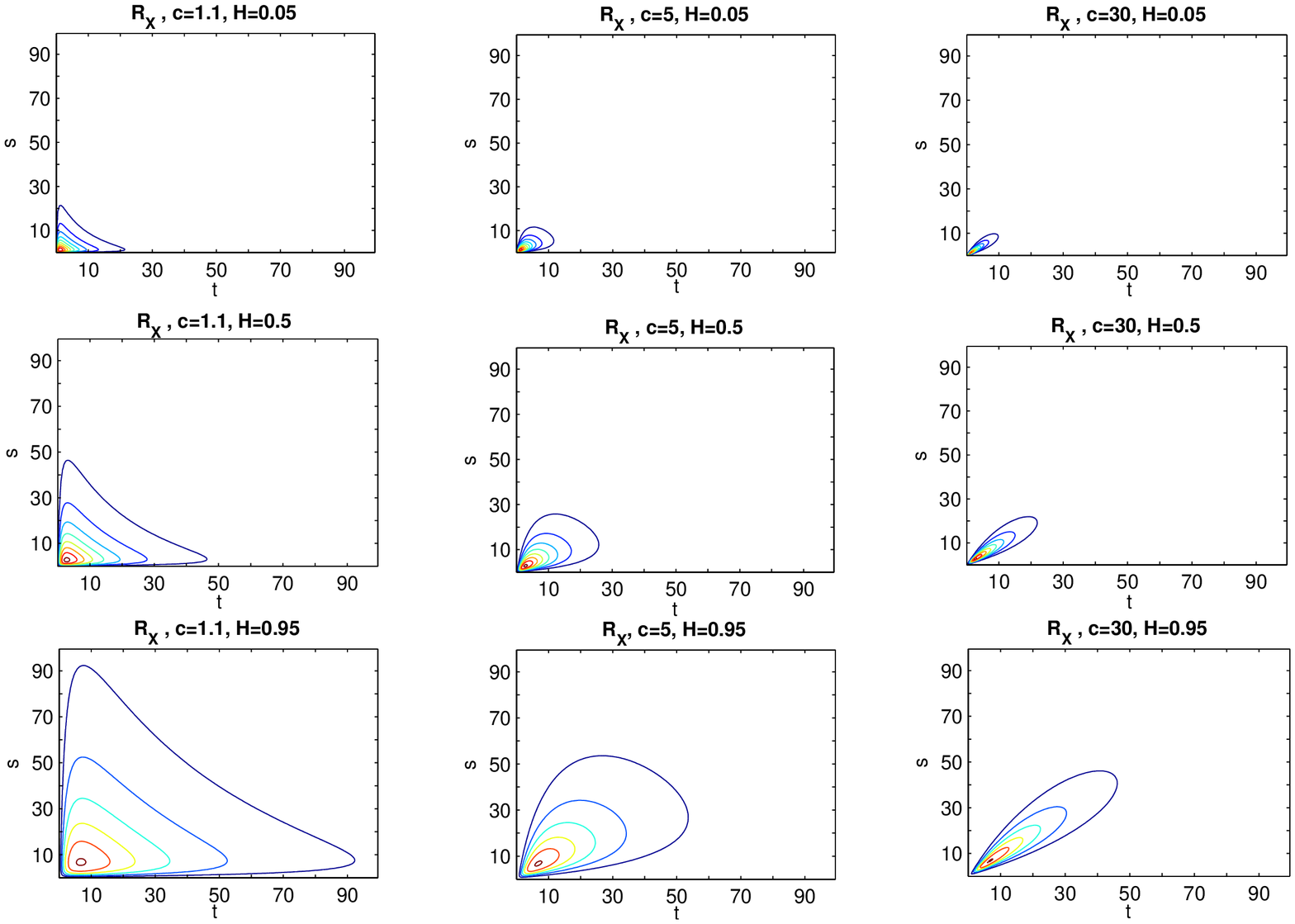}}
\vspace{0.2in}
\caption{\scriptsize Contour plot of covariance matrices $\bf{R}_X$ of LSSPs for different parameter values $c$ and $H$. Left column: LSSP, $c=1.1$, and $H$ ranging from (top) $H=0.05$ to (bottom) $H=0.95$. Middle column: LSSP, $c=5$, and $H$ varies from $H=0.05$ to $H=0.95$. Right column: LSSP, $c=30$. Each column shows that the covariance shape for a constant $c$ is identical, but as  $H$ increases, the slope of $R_X$ decreases. Also, each row shows that, with the same Hurst parameter $H$, as the parameter $c$ increases, the peaks in sides are inclined into the peak in the middle.}
\end{figure}

$$\qquad \qquad\qquad \qquad=\int_0^\infty f_0(t\sqrt{\tau})f_0^*(t/\sqrt{\tau}) \tau^{-i2\pi(\xi-\frac{a}{2\pi} \ln t)-1} d\tau\;\;$$

$\qquad\qquad \qquad \qquad \qquad\quad \quad\;\;\;\;\;\;=W_{f_0}(t,\xi-\frac{a}{2\pi}\ln t).$\\

The next theorem states that the optimal kernel for an LSSCP is obtained by  a transformation of the kernel of the corresponding LSSP.

\begin{theo}
Suppose $C_X$ and $Q$ define a circular symmetric Gaussian LSSP with covariance given by (\ref{rxk}). Then, the optimal time-frequency kernel for a LSSCP process with chrip parameters $a$ and $b$, denoted $\Phi_{c.opt}$, is
\begin{equation}
\Phi_{c.opt}(t,\xi)=\Phi_{opt}(t,\xi-\frac{a}{2\pi}\ln t)\label{26}
\end{equation}
where $\Phi_{opt}$ is the optimal kernel of the corresponding LSSP (i.e., $a=b=0$), defined by (\ref{16}) and  (\ref{22}).
\end{theo}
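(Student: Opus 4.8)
The plan is to reduce the LSSCP optimal-kernel computation to the LSSP one by tracking how the chrip multiplication $Z(t) = X(t)\,t^{ia(\ln\sqrt t - b)}$ propagates through the quantities that enter the global optimal kernel formula, namely the ESIAF $A_{E,Z}$ and the second moment $E|A_Z(\theta,\tau)|^2$ that appear in (\ref{15}). First I would record the effect of the chrip on the SIWS: using $R_Z(t\sqrt\tau, t/\sqrt\tau) = Q(t)C_X(\tau)\tau^{ia(\ln t - b)}$ from the remarks following Definition on LSSCPs, and plugging into (\ref{wvs}), the Mellin transform in $\tau$ picks up a shift, so $W_{E,Z}(t,\xi) = W_{E,X}(t,\xi - \tfrac{a}{2\pi}\ln t)$. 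This is exactly the SIWD-level identity $W_f(t,\xi)\to W_{f_0}(t,\xi-\tfrac{a}{2\pi}\ln t)$ displayed just before the theorem, now at the level of the spectrum.

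Next I would carry the same bookkeeping through the ambiguity-domain objects. For a fixed realization, $Z(t\sqrt\tau)Z^*(t/\sqrt\tau) = X(t\sqrt\tau)X^*(t/\sqrt\tau)\,\tau^{ia(\ln t - b)}$, and since the extra factor is $t$-dependent it does \emph{not} come out of the $\mathcal M_t$ integral in (\ref{ax}); instead it produces a frequency/scale reparametrization. Concretely, I expect $A_{E,Z}(\theta,\tau)$ to be $A_{E,X}$ evaluated at shifted arguments determined by $a$ (the $\ln t$ in the exponent turning into a derivative/shift in the Mellin variable $\theta$), and likewise $E|A_Z(\theta,\tau)|^2$ to be the corresponding reparametrization of $E|A_X(\theta,\tau)|^2$ — crucially with the \emph{same} reparametrization in numerator and denominator of (\ref{15}). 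The cleanest route is probably to avoid computing $A_{E,Z}$ directly and instead work with (\ref{13'})–(\ref{16}): the pair of Mellin transforms $\mathcal M_1^{-1}\mathcal M_2$ intertwines the ambiguity-domain kernel $\phi_{opt}$ with the time-frequency kernel $\Phi_{opt}$, and the chrip acts on the time-frequency side simply as the substitution $(t,\xi)\mapsto(t,\xi - \tfrac{a}{2\pi}\ln t)$, which is what (\ref{26}) asserts. So I would (i) establish the SIWS shift above, (ii) show that the whole optimality system (\ref{38})–(\ref{39}) for $Z$ is obtained from that for $X$ by this same substitution — using that the map $f\mapsto f\cdot(\text{chrip})$ is a unitary reindexing of the relevant $L^2$ spaces, hence commutes with orthogonal projection onto $\overline{S}_X$ — and (iii) conclude that $\Phi_{c.opt}$ is $\Phi_{opt}$ composed with the substitution. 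The parameter $b$ contributes only a unimodular constant $\tau^{-iab}$ to $R_Z(t\sqrt\tau,t/\sqrt\tau)$, which cancels in the ratio (\ref{15}) and in $|A_{E,Z}|^2$, so it does not appear in the final kernel — this should be remarked explicitly.

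The main obstacle I anticipate is bookkeeping the $\ln t$ factor correctly through the Mellin transforms: because the chrip's phase depends on $t$ through $\ln t$ rather than being a pure power of $t$ or of $\tau$, it is not simply a multiplicative Mellin shift in one variable but a genuine coupling between the time variable and the frequency variable, which is precisely why the resulting kernel substitution is $\xi\mapsto\xi-\tfrac{a}{2\pi}\ln t$ rather than a fixed translation. I would handle this by always pushing the identity back to the SIWD/SIWS level (where the displayed computation before the theorem already does the work) and invoking the intertwining relations (\ref{13'}) and (\ref{16}) to transfer it to the kernel, rather than manipulating $\phi_{opt}(\theta,\tau)$ in (\ref{22}) directly, where the $a$-dependence would be awkward to see. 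A secondary point to check is that the substitution $(t,\xi)\mapsto(t,\xi-\tfrac{a}{2\pi}\ln t)$ is a measure-preserving bijection of $\mathbbm R^+\times\mathbbm R$ (its Jacobian is $1$), so that it genuinely induces a unitary operator and the orthogonal-projection characterization of the optimal kernel is preserved; this is straightforward but should be stated.
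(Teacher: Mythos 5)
Your strategy is sound and genuinely different from the paper's. The paper proves Theorem 9 by direct computation in the ambiguity domain: it inserts the LSSCP covariance (\ref{24}) into (\ref{aex}) and (\ref{19}), uses the circularly symmetric Gaussian moment structure (\ref{18}) to evaluate $A_{E,X}$ and $D_1$ for the chrip process, obtains the explicit kernel (\ref{lsscp}), observes that it equals $\phi_{opt}(\theta-\tfrac{a}{2\pi}\ln\tau,\tau)$, and only then passes to the time-frequency domain via $\Phi=\mathcal{M}_1^{-1}\mathcal{M}_2\phi$ with the change of variables $\theta'=\theta-\tfrac{a}{2\pi}\ln\tau$ (using $t^{ia\ln\tau}=\tau^{ia\ln t}$) to reach (\ref{26}). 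You instead argue by covariance: multiplication by the deterministic unimodular chrip reindexes the SIWD/SIWS and the estimator class in a measure-preserving way, so the MMSE problem for $Z$ is the reparametrized problem for $X$ and the optimal kernel transforms accordingly. That route avoids recomputing fourth moments altogether; indeed the cleanest version of your idea is the pathwise identity $A_Z(\theta,\tau)=\tau^{-iab}A_X(\theta-\tfrac{a}{2\pi}\ln\tau,\tau)$, which immediately gives $|A_{E,Z}(\theta,\tau)|=|A_{E,X}(\theta-\tfrac{a}{2\pi}\ln\tau,\tau)|$ and $E|A_Z(\theta,\tau)|^2=E|A_X(\theta-\tfrac{a}{2\pi}\ln\tau,\tau)|^2$, hence $\phi_{c.opt}(\theta,\tau)=\phi_{opt}(\theta-\tfrac{a}{2\pi}\ln\tau,\tau)$ from (\ref{15}), after which the paper's final Mellin change of variables yields (\ref{26}). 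What the paper's computation buys in exchange is the explicit formula (\ref{lsscp}), which is reused later for the worked example (\ref{phi2}).

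Three points in your sketch need tightening. First, your stated spectrum identity drops $b$: from $R_Z(t\sqrt{\tau},t/\sqrt{\tau})=Q(t)C_X(\tau)\tau^{ia(\ln t-b)}$ and (\ref{wvs}) one gets $W_{E,Z}(t,\xi)=W_{E,X}\bigl(t,\xi-\tfrac{a}{2\pi}(\ln t-b)\bigr)$; the parameter $b$ disappears from the \emph{kernel} (as you note, via unimodularity and cancellation in (\ref{15})), but not from the spectrum itself, so the identity as written holds only for $b=0$. Second, the substitution acting on the evaluation point $(t,\xi)$ and the induced substitution on the kernel argument are inverse to each other; you should write the intertwining relation explicitly, e.g. $P^{\Phi}_Z(t,\xi)=P^{\Phi'}_X\bigl(t,\xi-\tfrac{a}{2\pi}(\ln t-b)\bigr)$ with $\Phi'(s,\eta)=\Phi\bigl(s,\eta+\tfrac{a}{2\pi}\ln s\bigr)$, since otherwise the sign in (\ref{26}) is easy to get backwards. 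Third, the theorem concerns the \emph{global} kernel, so invoking the local orthogonality system (\ref{38})--(\ref{39}) and projection onto $\overline{S}_X$ is not by itself the object being optimized; you must either show the integrated mean-square error with respect to the scale-invariant measure $\tfrac{dt}{t}\,d\xi$ is preserved under the (Jacobian-one) substitution — your measure-preservation remark is exactly what is needed here and should be promoted from a side comment to a step — or bypass the projection language and argue directly from the closed-form global solution (\ref{15}) as above. With these repairs your argument is a valid alternative proof of (\ref{26}).
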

\begin{proof}
See A5.
\end{proof}

\subsection{Multicomponent Locally Self-Similar Circularly Symmetric Gaussian Process}

\begin{defi}
A multicomponent LSSP (MLSSP) is a process whose covariance has the form
\begin{equation}
R_X(t,s)=\sum_{j=1}^\infty Q_j(\sqrt{ts}) C_{X_j}(t/s),\label{27}
\end{equation}
where each term $ Q_j\varotimes C_{X_j}\circ k^{-1}$ is the covariance function of a LSSP.
\end{defi}

By insertion of (\ref{27}) into (\ref{wvs}) and (\ref{aex}), the SIWS and ESIAF are
$$W_{E,X}(t,\xi)=\int_0^\infty \sum_{j=1}^\infty Q_j(t) C_{X_j}(\tau) \tau^{-i2\pi\xi-1} d\tau$$
$$\;\;\;\;\;=\sum_{j=1}^\infty Q_j(t) (\mathcal{M}C_{X_j})(i2\pi\xi),$$

$$A_{E,X}(\theta, \tau)=\int_0^\infty \sum_{j=1}^\infty Q_j(t) C_{X_j}(\tau) t^{-i2\pi\theta-1}dt$$
$$\quad\quad=\sum_{j=1}^\infty  C_{X_j}(\tau) (\mathcal{M}Q_j)(i2\pi\theta).$$

\vspace{2.5in}
\input{epsf}
\epsfxsize=2.5in \epsfysize=1.5in
\begin{figure}
\vspace{-1.7 in}
\centerline{\epsfxsize=8in \epsfysize=5in \epsffile{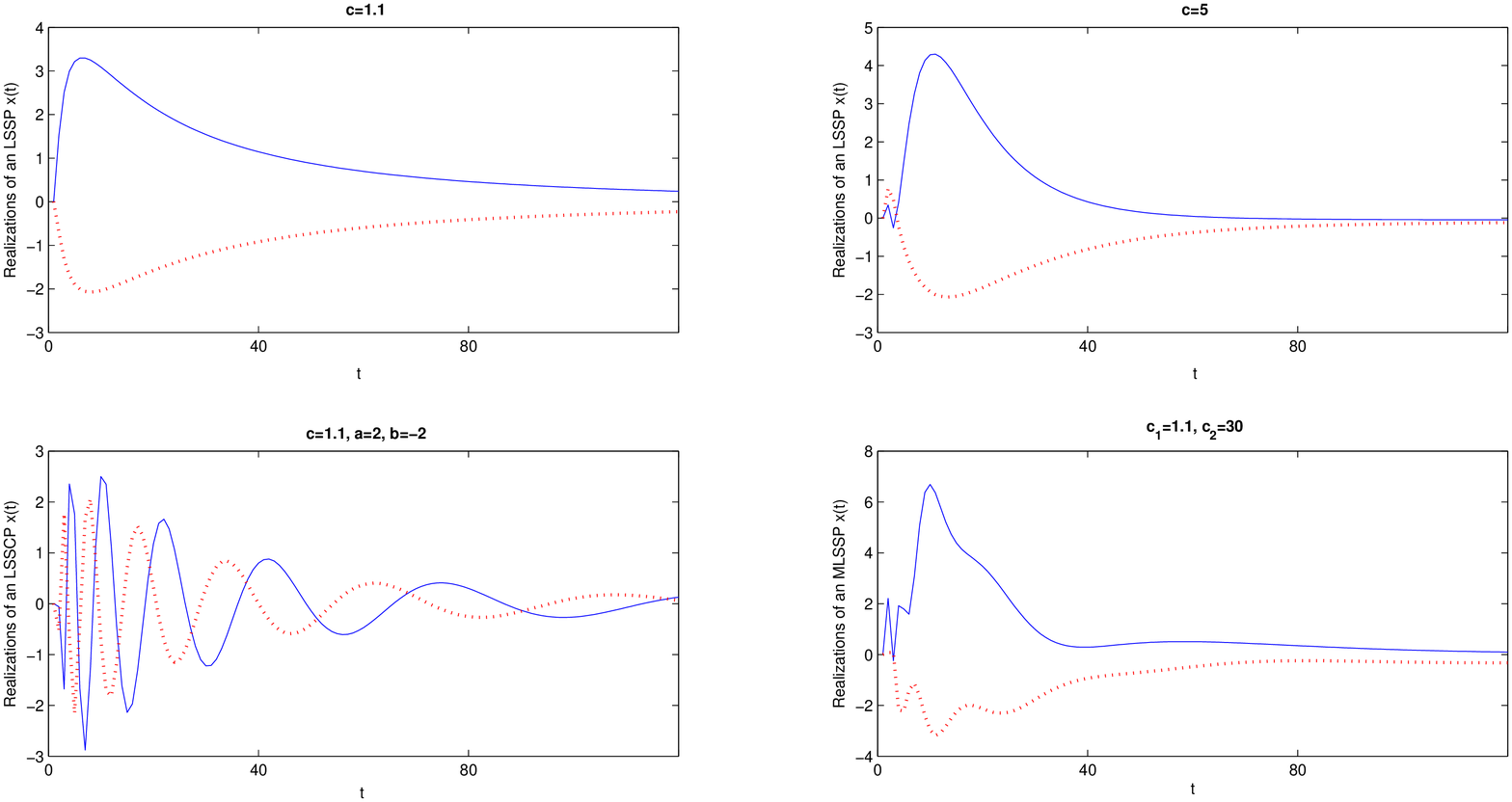}}
\vspace{-0.1in}
\caption{\scriptsize Sample realizations of LSSPs with $H=0.5$ corresponding to the covariance function $R_X(t,s)$ in Figure 1. Top left: LSSP, $c=1.1$. Top right: LSSP, $c=5$.  Bottom left: LSSCP, $c=1.1$, $a=2$, $b=-2$ (real-valued part only), and Bottom right: MLSSP, $c_1=1.1$, $c_2=30$, which is a combination of LSSPs with $c=1.1$ and $c=30$.}
\end{figure}

\vspace{2.5in}
\input{epsf}
\epsfxsize=3in \epsfysize=2in
 \begin{figure}
\vspace{-2in}
\centerline{\epsfxsize=7in \epsfysize=5in \epsffile{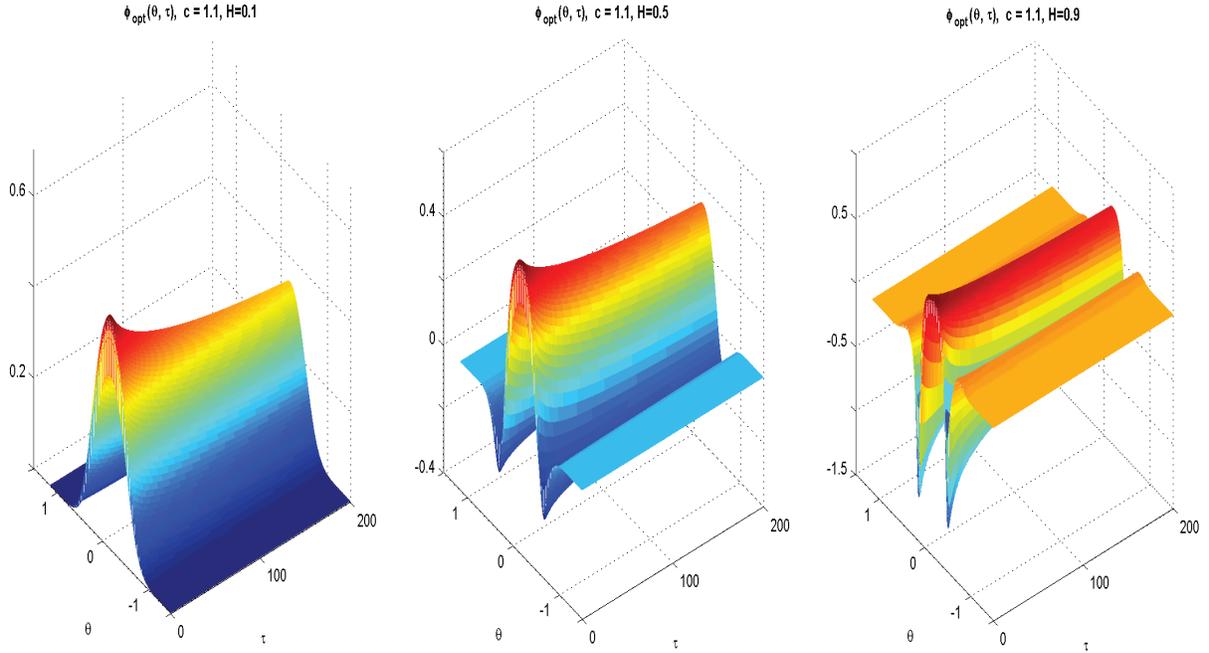}}
\vspace{-0.3in}
\caption{\scriptsize The mean-square error optimal ambiguity domain kernel corresponding to the LSSP with covariance function in Figure 1, with $c=1.1$
  Left: $\phi_{opt}^{LSSP}(\theta, \tau)$, $H = 0.1$. Middle: $\phi_{opt}^{LSSP}(\theta, \tau)$, $H = 0.5$. Right: $\phi_{opt}^{LSSP}(\theta, \tau)$, $H = 0.9$.}
\end{figure}

Restricting to the circularly symmetric case; and using (\ref{18}) and (\ref{19}), the optimal ambiguity domain kernel for circularly symmetric MLSSP is derived as
\begin{equation}
\phi(\theta, \tau)=\frac{|\sum_{j=1}^\infty  C_{X_j}(\tau) (\mathcal{M}Q_j)(i2\pi\theta)|^2}{|\sum_{j=1}^\infty  C_{X_j}(\tau) (\mathcal{M}Q_j)(i2\pi\theta)|^2+\sum_{j,k=1}^\infty \mathcal{M}(C_{X_j} {C}^*_{X_k})(i2\pi\theta)\; (Q_j\varoast Q^*_k)(\tau)},\label{mlsscpkernel}
\end{equation}

\noindent where the second term of the denominator is
$$D_1(\theta, \tau)=\int_0^\infty\int_0^\infty\sum_{j,k=1}^\infty Q_j(\sqrt{t_1t_2\tau})Q^*_k(\sqrt{\frac{t_1t_2}{\tau}})C_{X_j}(\frac{t_1}{t_2})C^*_{X_k}(\frac{t_1}{t_2}) t_1^{-i2\pi\theta-1} t_2^{i2\pi\theta-1} dt_1 dt_2$$

$$=\sum_{j,k=1}^\infty \int_0^\infty C_{X_j}(v)C^*_{X_k}(v) v^{-i2\pi \theta-1}dv \int_0^\infty Q_j(u\sqrt{\tau})Q^*_k(u/\sqrt{\tau}) \frac{du}{u}.$$

As a generalization of LSSCPs and MLSSPs, we can introduce a multicomponent locally self-similar chrip process (MLSSCP) whose covariance function has the form
\begin{equation}
R_X(t,s)= \sum_{j=1}^\infty Q_j(\sqrt{ts}) C_{X_j}(t/s) (t/s)^{i a_j(\ln \sqrt{ts}-b_j)}, \label{29}
\end{equation}
\noindent where each term are LSSCP covariances with individual constants $a_j$ and $b_j$. A MLSSCP has a covariance that is a sum of covariances with chrip behavior of various localization in time and chrip constants.

By insertion of (\ref{29}) into (\ref{aex}) and (\ref{19}), we have

$$A_{E,X}(\theta, \tau)=\int_0^\infty \sum_{j=1}^\infty Q_j(t) C_{X_j}(\tau) \tau^{ia_j(\ln t-b_j)}t^{-i2\pi\theta-1} dt$$

$$\quad\quad\quad\;\;\;\;\;\;\;\;\;\;\;=\sum_{j=1}^\infty C_{X_j}(\tau) \tau^{-ia_jb_j}\int_0^\infty Q_j(t)t^{-i2\pi\theta+ia_j\ln \tau-1} dt$$

\begin{equation}
\quad\quad\;\;\;\;\;\;\;\;\;\;\;=\sum_{j=1}^\infty C_{X_j}(\tau) \tau^{-ia_jb_j} (\mathcal{M} Q_j)(i2\pi(\theta-\frac{a_j}{2\pi} \ln \tau)),\label{mlsscpa}
\end{equation}

$$D_1(\theta, \tau)=\int_0^\infty\int_0^\infty\sum_{j,k=1}^\infty Q_j(\sqrt{t_1t_2\tau})Q^*_k(\sqrt{\frac{t_1t_2}{\tau}})C_{X_j}(\frac{t_1}{t_2})C^*_{X_k}(\frac{t_1}{t_2})$$

$$\;\;\;\;\;\;\;\;\;\;\quad\times(\frac{t_1}{t_2})^{ia_j(\ln \sqrt{t_1t_2\tau}-b_j)-ia_k(\ln \sqrt{t_1t_2/\tau}-b_k)}\; t_1^{-i2\pi\theta-1} t_2^{i2\pi\theta-1} dt_1 dt_2$$

$$\;\;\;\;=\sum_{j,k=1}^\infty \int_0^\infty\int_0^\infty Q_j(u\sqrt{\tau})Q^*_k(u/\sqrt{\tau})C_{X_j}(v)C^*_{X_k}(v)$$
$$\times v^{-i2\pi\{\theta-\frac{ a_j}{2\pi}(\ln u\sqrt{\tau}-b_j)+\frac{ a_k}{2\pi}(\ln \frac{ u}{\sqrt{\tau}}-b_k)\}-1}\;\frac{du}{u}\;dv.$$
Thus, we have that
$$D_1(\theta, \tau)=\sum_{j=1}^\infty (\mathcal{M}|C_{X_j}|^2)(i2\pi(\theta-\frac{ a_j}{2\pi}\ln \tau)) \int_0^\infty Q_j(u\sqrt{\tau})Q^*_j(u/\sqrt{\tau})\frac{du}{u}$$

\begin{equation}
+\sum_{j,k=1, j\neq k}^\infty\int_0^\infty Q_j(u\sqrt{\tau})Q^*_k(u/\sqrt{\tau}) g(\theta,u)\frac{du}{u},\label{mlsscp}
\end{equation}
where
$$g(\theta,u)=\mathcal{M}(C_{X_j}(v)C^*_{X_k})(\theta-\frac{a_j}{2\pi}(\ln u\sqrt{\tau}-b_j)+\frac{a_k}{2\pi}(\ln \frac{u}{\sqrt{\tau}}-b_k)).$$\\

By insertion of (\ref{mlsscpa}) and (\ref{mlsscp}) into (\ref{15}), the optimal ambiguity domain kernel for a circularly symmetric Gaussian MLSSCP is obtained.

\subsection{Discrete-time Examples}
Here we study an example of a circularly symmetric Gaussian LSSP, LSSCP and MLSSP  with covariance functions (\ref{rxk}), (\ref{24}) and (\ref{27})  respectively. We consider
\begin{equation}
Q(\tau)=\tau^{2H-\frac{1}{2}\ln \tau}, \;\qquad C_X(\tau)=\tau^{-\frac{c}{8}\ln \tau}, c\geq 1.\label{example}
\end{equation}
 For a given $Q$ and $C_X$, the covariance matrices, $R_X$, of the processes are computed, and depicted in Figure 1, for $H=0.5$, $c=1.1$, $c=30$; and the corresponding contour plots are shown in Figure 2. Also, the contour plots of the LSSP for three different parameter values of $H$ ranging from $H=0.05$ to $H=0.95$,  and $c=1.1$, $c=5$ and $c=30$  are displayed in Figure 3.

The sample paths of  the circularly symmetric Gaussian LSSP, LSSCP and MLSSP, corresponding to the covariance matrices of Figure 1, are simulated and displayed in Figure 4 with $H=0.5$. The realizations are simulated from the covariance matrices $R_X$, according to
\begin{equation}
 X=Lu,
\end{equation}
where $u$ is a realization of a real-values white Gaussian zero mean stochastic process with variance one, and the matrix $L$ is related to the covariance matrix $R_X$ as
\begin{equation}
R_X(t,s)=E(X(t)X^*(s))=L\;E(u(t)u^*(s))\;L^*=LL^*.
\end{equation}

The optimal ambiguity domain kernel for a circularly symmetric Gaussian LSSP, LSSCP and MLSSP are obtained in (\ref{22}), (\ref{lsscp}) and (\ref{mlsscpkernel}), where for given $Q$ and $C_X$ in (\ref{example}) are computed as

\begin{equation}
\phi_{opt}^{LSSP}(\theta,\tau)=\frac{1}{1+c^{-1/2}\; e^{(1-1/c)(2\pi\theta)^2+4 H i(2\pi\theta)}\; \tau^{(\frac{c-1}{4})\ln\tau}},\label{phi1}
\end{equation}

\begin{equation}
\phi_{opt}^{LSSCP}(\theta,\tau)=\frac{1}{1+c^{-1/2}\; e^{(1-1/c)(2\pi\theta-a\ln \tau)^2+4Hi(2\pi\theta-a\ln \tau)}\; \tau^{(\frac{c-1}{4})\ln\tau}},\label{phi2}
\end{equation}

\begin{equation}
\phi_{opt}^{MLSSP}(\theta,\tau)=\bigg{(}1\big{/}\bigg{(} 1+\frac{\sum_{j,k}\sqrt{\frac{2}{c_j+c_k}}\;e^{-\frac{2}{c_j+c_k}(2\pi\theta)^2+(H_j+H_k)^2} \; \tau^{(H_j-H_k)-\frac{1}{4} \ln \tau}}{\big{|}\sum_{j} e^{\frac{1}{2}(2H_j-i2\pi\theta)^2} \tau^{-\frac{c_j}{8}\ln \tau} \big{|}^2  } \bigg{)}\bigg{)},\label{phi3}
\end{equation}

\begin{equation}
\phi_{opt}^{MLSSCP}(\theta,\tau)=\bigg{(}1\big{/}\bigg{(} 1+\frac{\sum_{j,k}\sqrt{\frac{2}{c_j+c_k}}\;e^{-\frac{2}{c_j+c_k}(2\pi\theta-a\ln \tau)^2+(H_j+H_k)^2} \; \tau^{(H_j-H_k)-\frac{1}{4} \ln \tau}}{\big{|}\sum_{j} e^{\frac{1}{2}(2H_j-i(2\pi\theta-a\ln \tau))^2} \tau^{-\frac{c_j}{8}\ln \tau} \big{|}^2  } \bigg{)}\bigg{)},\label{phi4}
\end{equation}

Eq (\ref{phi4}) is the optimal kernel of a MLSSCP which is derived as a combination of the optimal kernels for LSSCP and MLSSP. For validity of (\ref{phi1})-(\ref{phi4}), see A6. The optimal ambiguity domain kernels of the LSSP, Eq (\ref{phi1}), for $c=1.1$ and three different Hurst parameters ranging from $H=0.1$ to $H=0.9$ are plotted in Figure 5.

\input{epsf}
\epsfxsize=3in \epsfysize=2in
 \begin{figure}
\vspace{-1.8in}
\centerline{\epsfxsize=7.5in \epsfysize=4.5in \epsffile{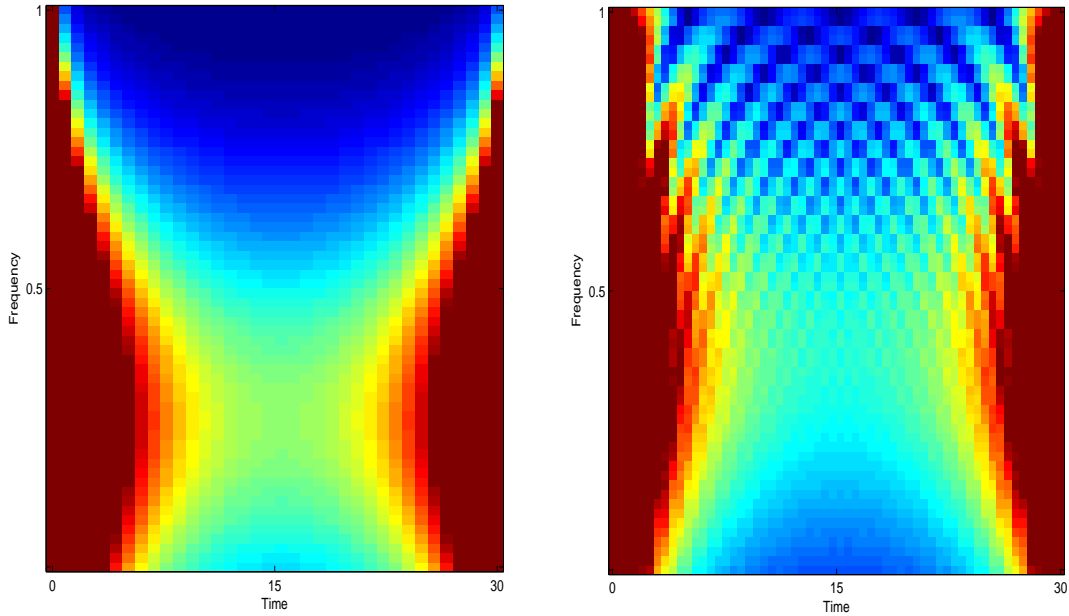}}
\vspace{-0.3in}
\caption{\scriptsize Left: True SIWS of the Gaussian LSSP. Right: The SIWD of the Gaussian LSSP.}
\end{figure}

\subsection{Simulation}
Now, the predominance of the proposed method for the SIWS estimation of a Gaussian LSSP is investigated over the classical optimal kernel WVS estimation. To this end, the performance of the globally optimal kernel estimator is compared with that of the classical WVS.

Let $X$ be a Gaussian LSSP with the covariance matrix (\ref{rxk}) where $Q$ and $C_X$ are identified by (\ref{example}). We assume that $H=0.5$ and $c=1.1$, so
$$R_X(t,s)=\sqrt{ts}^{2H-\frac{1}{2}\ln {\sqrt{ts}}}(t/s)^{-\frac{c}{8}\ln {(t/s)}}.$$
The SIWS of a LSSP is given by (\ref{welssp}), and is computed for given $Q$ and $C_X$ , Figure 6 (Left). The SIWS is estimated from the realization $X$, using relation (\ref{13'}), where the optimal kernel $\phi(\theta, \tau)$ is computed in (\ref{phi1}), Figure 7 (Left). Clearly, the optimal kernel SIWS estimation gives a much more accurate estimate for the true SIWS, than the classical optimal kernel WVS estimation Figure 7 (Right).

The simulation results show that, although LSSPs are a subclass of nonstationary processes, but the scale invariant property exists in such processes, makes them different from the other nonstationary ones; and ordinary nonstationary spectrums and estimation methods may not be applicable for TF analysis of such processes. So, some special tools should be considered, which are compatible with the scale invariant property.

\vspace{2.5in}
\input{epsf}
\epsfxsize=3in \epsfysize=2in
 \begin{figure}
\vspace{-1.8in}
\centerline{\epsfxsize=7.5in \epsfysize=4.5in \epsffile{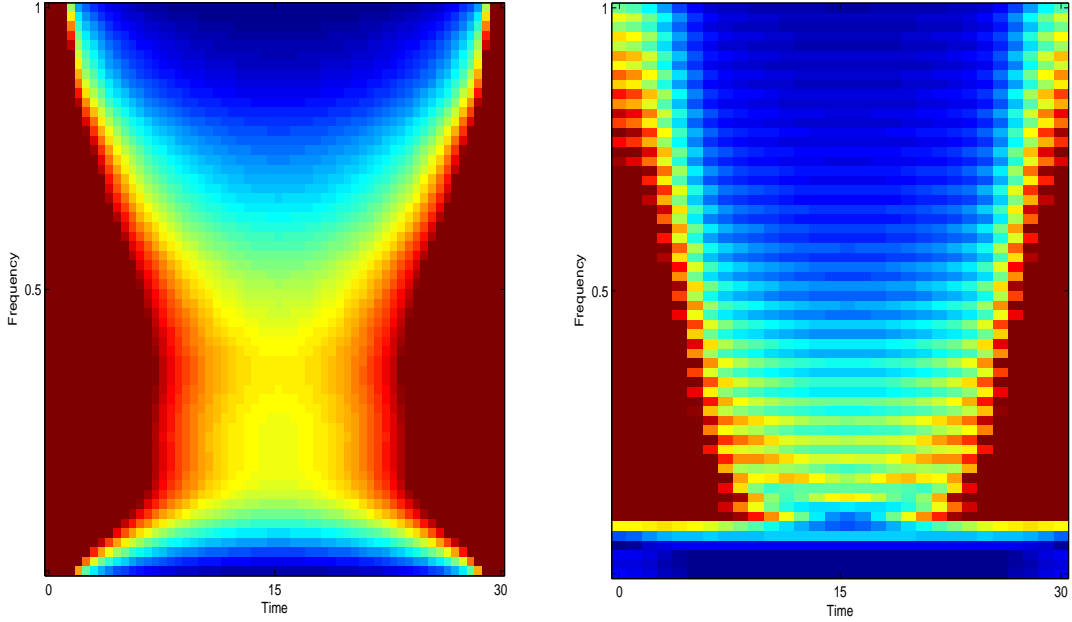}}
\vspace{-0.3in}
\caption{\scriptsize  Left: The optimal kernel SIWS estimation of a Gaussian LSSP. Right: The classical optimal WVS estimation.}
\end{figure}

\section{Conclusions}
In time-frequency analysis of locally self-similar processes (LSSPs), we obtained the scale invariant Wigner spectrum (SIWS) estimation by using the Cohen's class counterpart of time-frequency representations, which is compatible with the scale invariant property. By this, we provided a better estimation than the classical Wigner-Ville spectrum. By introducing the optimal kernel for the SIWS, which minimizes the mean-square error, our estimation has been modified and provided a close estimation to the true SIWS.

Although we restrict ourselves to Gaussian LSSPs, but the MMSE optimal kernel can be computed for the SIWS estimation of non-Gaussian LSSPs. In such problems, the optimal kernel would be as a linear combination of the optimal kernels of Gaussian LSSPs.


\section{Appendix}
\textbf{A1:}
\noindent Proof of proposition $6$:
If $X(t)$ is a multiplicative harmonizable process, then from (\ref{2'})
we have that:
$$R_X(t\sqrt{\tau},t/\sqrt{\tau})=\int_{-\infty}^\infty \int_{-\infty}^\infty (t\sqrt{\tau})^{H+i2\pi\beta} (t/\sqrt{\tau})^{H-i2\pi\sigma} m(\beta,\sigma)  d\beta d\sigma.$$
By insertion of the above relation into the definition of  the SIWS , Eq (\ref{wvs}), $$W_{E,X}(t,\xi)= \int_{0}^\infty \int_{-\infty}^\infty \int_{-\infty}^\infty t^{2H+i2\pi(\beta-\sigma)} \tau^{i2\pi(\frac{\beta+\sigma}{2})} \tau^{-i2\pi\xi-1} m(\beta,\sigma)  d\beta d\sigma d\tau$$

$$\qquad\qquad\qquad\quad=\int_0^\infty \int_{-\infty}^\infty \int_{-\infty}^\infty  t^{2H+i2\pi\theta}  \tau^{-i2\pi(\xi-f)-1} m(f+\theta/2,f-\theta/2) d\theta df d\tau,$$

\noindent where $\theta:=\beta-\sigma$ and $f:=\frac{\beta+\sigma}{2}$.
Using relation $\int_0^\infty \tau^{-i2\pi(\xi-f)-1}d\tau=\delta(\xi-f)$, where $\delta(.)$ is a dirac delta function, (\ref{wteta}) is achieved.

$\hspace{6in} \boxempty$\\

\noindent\textbf{A2:}
Validity of Eq (\ref{15}): By multiplying both sides of (\ref{39}) with $t^{-i2\pi\theta '-1} (\tau ')^{i2\pi\xi}$ and integrating over $(t,\xi)$, the l.h.s leads to:
$$\int_{-\infty}^\infty \int_0^\infty\int_{-\infty}^\infty \int_0^\infty E\big{\{}A_X(\theta, \tau)A^*_X(\theta ',\tau ')\big{\}}\phi_{opt}(\theta, \tau) t^{-i2\pi(\theta '-\theta)-1} (\frac{\tau}{\tau '})^ {-i2\pi\xi} \;\frac{d\tau}{\tau} d\theta dt d\xi$$

$$\qquad\qquad\;\;\;= \int_{-\infty}^\infty \int_0^\infty E \big{\{}A_X(\theta, \tau)A^*_X(\theta ',\tau ')\big{\}}\phi_{opt}(\theta, \tau)\delta(\theta '-\theta)\delta(\ln \tau - \ln \tau ')\frac{d\tau}{\tau} d\theta$$
$$= E|A_X(\theta, \tau)|^2\phi_{opt}(\theta, \tau),\qquad\qquad\;\;\;\qquad\qquad\;\;\;\qquad\qquad\;\;\;\;\;\;$$

\noindent where $\int_0^\infty  t^{-i2\pi u-1} dt=\delta(u)$ and  $\int_{-\infty}^\infty v^{-i2\pi\xi}d\xi=\delta(\ln v)$. By insertion of $W_{E,X}=\mathcal{M}_1^{-1}\mathcal{M}_2 A_{E,X}$ into the r.h.s. of (\ref{39}), we have that:

$$\int_{-\infty}^\infty \int_0^\infty\int_{-\infty}^\infty \int_0^\infty A_{E,X}(\theta,\tau)E\{A^*_X(\theta ',\tau ')\}t^{-i2\pi(\theta '-\theta)-1} (\frac{\tau}{\tau '})^ {-i2\pi\xi}\; \frac{d\tau}{\tau} d\theta dt d\xi=|A_{E,X}(\theta,\tau)|^2.$$

$\hspace{6in} \boxempty$\\

\noindent\textbf{A3:}
\noindent Validity of (\ref{18}) and (\ref{20}):
By (\ref{ax}),
$$E|A_X(\theta,\tau)|^2=E|A_X(\theta,\tau){A^*_X(\theta,\tau)}|$$
$$=E \bigg{\{}\int_0^\infty \int_0^\infty X(t_1\sqrt{\tau})
{X^*(t_1/\sqrt{\tau})} {X^*(t_2\sqrt{\tau})} X(t_2/\sqrt{\tau})t_1^{-i2\pi\theta-1}t_2^{i2\pi\theta-1}dt_1 dt_2 \bigg{\}}$$

$$=\int_0^\infty \int_0^\infty E\{X(t_1\sqrt{\tau}){X^*(t_1/\sqrt{\tau})}{X^*(t_2\sqrt{\tau})} X(t_2/\sqrt{\tau})
\}t_1^{-i2\pi\theta-1}t_2^{i2\pi\theta-1}dt_1 dt_2.$$
The integral inside the expectation operator is a stochastic integral. The interchange of expectation and integration is justified if the above-mentioned stochastic integral exists in m.s. sense \cite{say}. For a circularly symmetric Gaussian process, using (\ref{9}), we have that
$$E|A_X(\theta,\tau)|^2=\int_0^\infty \int_0^\infty R_X(t_1\sqrt{\tau},t_1/\sqrt{\tau}){R^*_X(t_2\sqrt{\tau},t_2/\sqrt{\tau})}
t_1^{-i2\pi\theta-1}t_2^{i2\pi\theta-1}dt_1 dt_2$$
\begin{equation}
\qquad\qquad\qquad\qquad\;\;\;\;+\int_0^\infty \int_0^\infty R_X(t_1\sqrt{\tau},t_2\sqrt{\tau}){R^*_X(t_1/\sqrt{\tau},t_2/\sqrt{\tau})}
t_1^{-i2\pi\theta-1}t_2^{i2\pi\theta-1}dt_1 dt_2.\label{a11}
\end{equation}

\noindent By the ESIAF, (\ref{aex}), the first integral is $|A_{E,X}(\theta,\tau)|^2$, and we define the latter one as $D_1(\theta,\tau)$. For the case of real-valued processes, using (\ref{8}), we have an extra term (\ref{D2}) in $E|A_X(\theta, \tau)|^2$.

$\hspace{6in} \boxempty$\\

\noindent\textbf{A4:} Proof of Proposition $7$:
For a circularly symmetric Gaussian processes, the denominator of (\ref{22}) is obtained from (\ref{18}). By (\ref{aex1}), $A_{E,X}(\theta,\tau)=C_X(\tau) (\mathcal{M}Q)(i2\pi\theta)$; and $D_1(\theta, \tau)$ is obtained from (\ref{19}) by $R_X(t,s)=Q(\sqrt{ts}) C_X(t/s)$ as

$$D_1(\theta,\tau)=\int_{0}^\infty\int_{0}^\infty Q(\sqrt{t_1t_2\tau}) C_X(t_1/t_2)Q^*(\sqrt{\frac{t_1t_2}{\tau}})C^*_X(t_1/t_2) t_1^{-i2\pi\theta-1}\; t_2^{i2\pi\theta-1}dt_1 dt_2,$$
\noindent let $u:=\sqrt{t_1t_2}$ and $v:=t_1/t_2$. Then,

$$\quad D_1(\theta,\tau)=\int_{0}^\infty\int_{0}^\infty Q(u \sqrt{\tau} )Q^*( u/\sqrt{\tau}) C_X(v)C^*_X(v) (u \sqrt{v})^{-i2\pi \theta-1} (u/ \sqrt{v})^{i2\pi \theta-1} \frac{u}{v}\; du dv$$

$$=\int_{0}^\infty\int_{0}^\infty Q(u \sqrt{\tau} )Q^*( u/\sqrt{\tau}) C_X(v)C^*_X(v) v^{-i2\pi\theta-1} dv \frac{du}{u}\qquad\quad\quad$$

$$=\int_{0}^\infty |C_X(v)|^2 v^{-i2\pi\theta-1} dv  \int_{0}^\infty Q( u\sqrt{\tau})Q^*(u/\sqrt{\tau}) \frac{du}{u}\;\;\;\qquad\quad\quad\;\;\;$$

\begin{equation}
=(\mathcal{M}|C_X|^2)(i2\pi\theta)\int_{0}^\infty Q( u\sqrt{\tau})Q^*(u/\sqrt{\tau}) \frac{du}{u}.\quad\quad\;\;\;\quad\quad\;\;\;\quad\quad\;\;\;\;\label{d1}
\end{equation}

\noindent For real-valued Gaussian LSSPs, by (\ref{D2}),

$$D_2(\theta, \tau)=\int_{0}^\infty \int_{0}^\infty Q^2(\sqrt{t_1t_2}) C_X(\frac{t_1}{t_2}\tau) C_X(\frac{t_1}{t_2\tau})t_1^{-i2\pi\theta-1} \;t_2^{i2\pi\theta-1}dt_1 dt_2,$$

$$=\int_0^\infty \int_0^\infty Q^2(u) C_X(v\tau)C_X(v/\tau) v^{-i2\pi\theta-1} \frac{du}{u} dv\quad\quad\;$$
$$=\int_0^\infty Q^2(u)\frac{du}{u} \int_0^\infty C_X(v\tau)C_X(v/\tau) v^{-i2\pi\theta-1} dv,\quad\quad$$

\noindent and (\ref{d2}) is obtained.

$\hspace{6in} \boxempty$\\

\noindent\textbf{A5:} Proof of Theorem $9$:
By insertion of (\ref{24}) into (\ref{aex}) and (\ref{19}), we have that
$$A_{E,X}(\theta, \tau)=\int_0^\infty Q(t)C_X(\tau) \tau^{ia(\ln t-b)}t^{-i2\pi\theta-1}dt\qquad$$
$$\qquad\qquad\;\;\;=\tau^{-iab}C_X(\tau)\int_0^\infty Q(t) t^{-i2\pi(\theta-\frac{a}{2\pi}\ln \tau)-1}dt$$
$$\qquad\;\;\;\;\;=\tau^{-iab}C_X(\tau) (\mathcal{M}Q)(i2\pi(\theta-\frac{a}{2\pi}\ln \tau)),$$
$$D_1(\theta, \tau)=\int_0^\infty \int_0^\infty Q(\sqrt{t_1 t_2 \tau}) C_X(\frac{t_1}{t_2}) (\frac{t_1}{t_2})^{ia(\ln \sqrt{t_1 t_2 \tau}-b)}  Q^*(\sqrt{\frac{t_1 t_2}{ \tau}}\;)C^*_X(\frac{t_1}{t_2})(\frac{t_1}{t_2})^{-ia(\ln \sqrt{\frac{t_1 t_2 }{\tau}}-b)}$$
$$\times\; t_1^{-i2\pi\theta-1}\;t_2^{i2\pi\theta-1}dt_1 dt_2$$
$$\qquad\;\;=\int_0^\infty \int_0^\infty Q(u\sqrt{\tau}) Q^*(u/\sqrt{\tau})C_X(v)C^*_X(v)v^{ia(\ln u\sqrt{\tau}-b)}v^{-ia(\ln  \frac {u}{\sqrt{\tau}}-b)} v^{-i2\pi\theta-1} \frac{du}{u} dv,$$
where $u:=\sqrt{t_1 t_2}$ and $v:=\frac{t_1}{t_2}$. So,
$$D_1(\theta, \tau)=\int_0^\infty |C_X(v)|^2 v^{-i2\pi(\theta-\frac{a}{2\pi}\ln \tau)-1} dv \times \int_0^\infty Q(u\sqrt{\tau}) Q^*(u/\sqrt{\tau}) \frac{du}{u}$$
$$\;\;\;=(\mathcal{M}|C_X|^2)(i2\pi(\theta-\frac{a}{2\pi}\ln \tau))\int_0^\infty Q(u\sqrt{\tau}) Q^*(u/\sqrt{\tau}) \frac{du}{u}$$
$$\;\;\;\;=(\mathcal{M}|C_X|^2)(i2\pi(\theta-\frac{a}{2\pi}\ln \tau))\; (Q \varoast Q^*)(\tau).\qquad \qquad\qquad$$
Thus, using (\ref{15}), (\ref{18}) and (\ref{22}), the optimal ambiguity domain kernel for the circularly symmetric Gaussian LSSCP, denoted $\phi_{c.opt}$, is given by:
\begin{equation}
\phi_{c.opt}(\theta, \tau)=\frac{|C_X(\tau)|^2 |(\mathcal{M}Q)(i2\pi(\theta-\frac{a}{2\pi}\ln \tau))|^2}{|C_X(\tau)|^2 |(\mathcal{M}Q)(i2\pi(\theta-\frac{a}{2\pi}\ln \tau))|^2+(\mathcal{M}|C_X|^2)(i2\pi(\theta-\frac{a}{2\pi}\ln \tau))(Q \varoast Q^*)(\tau)}.\label{lsscp}
\end{equation}
\par By (\ref{22}), it is clear that $\phi_{c.opt}(\theta, \tau)=\phi_{opt}(\theta-\frac{a}{2\pi}\ln \tau, \tau)$ where $\phi_{opt}$ is the optimal kernel of the LSSP corresponding to $a=b=0$. Now it follows from $\Phi=\mathcal{M}_1^{-1}\mathcal{M}_2 \phi$ that
$$\Phi_{c.opt}(t,\xi)=\int_0^\infty \int_{-\infty}^\infty \phi_{c.opt}(\theta,\tau) t^{i2\pi\theta} \tau^{-i2\pi\xi-1}d\theta d\tau $$
$$\qquad \qquad\qquad \qquad\;\;=\int_0^\infty \int_{-\infty}^\infty \phi_{opt}(\theta-\frac{a}{2\pi}\ln \tau, \tau) t^{i2\pi\theta} \tau^{-i2\pi\xi-1}d\theta d\tau, $$
\noindent let $ \theta' :=\theta-\frac{a}{2\pi}\ln \tau$, then
$$\Phi_{c.opt}(t,\xi)=\int_0^\infty \int_{-\infty}^\infty \phi_{opt}(\theta ', \tau) t^{i2\pi\theta'} \tau^{-i2\pi(\xi-\frac{a}{2\pi}\ln t)-1}d\theta d\tau $$
$$=\Phi_{opt}(t,\xi-\frac{a}{2\pi}\ln t).\qquad\qquad\quad\;\;\;\;\;\;$$

$\hspace{6in} \boxempty$\\

\noindent\textbf{A6:} \noindent Validity of (\ref{phi1})-(\ref{phi4}): By insertion of functions $Q$ and $C_X$ defined in (\ref{example}) in Eq (\ref{22}), the optimal kernel (\ref{phi1}) is achieved.
$$(\mathcal{M}Q)(i2\pi\theta)=\int_0^\infty Q(t) t^{-i2\pi\theta-1} dt =\int_0^\infty t^{2H-\frac{1}{2}\ln t-i2\pi\theta-1} dt$$
$$\quad\quad\;\;\;\quad\quad\;\;\;\;=\int_0^\infty e^{-\frac{1}{2}u^2+u(2H-i2\pi\theta)}du=\sqrt{2\pi}\;e^{(2H-i2\pi\theta)^2/2},$$
where $u:=\ln t$. So, the numerator of (\ref{22}) is obtained as:
\begin{equation}
\big{|}A_{E,X}(\theta,\tau)\big{|}^2=|C_X(\tau)|^2|(\mathcal{M}Q)(i2\pi\theta)|^2=2\pi\;\tau^{-(c/4)\ln \tau}e^{(2H-i2\pi\theta)^2}. \label{a}
\end{equation}

\noindent Also,
$$(\mathcal{M}|C_X|^2)(i2\pi\theta)=\int_0^\infty\tau^{-\frac{c}{4}\ln\tau}\tau^{-i2\pi\theta-1}d\tau\quad\quad\;\;\;\quad\quad\;\;\;\quad\quad\quad\quad$$
$$\quad\quad\quad\quad\;\;\;\quad\quad\;\;\;=\int_0^\infty\tau^{-\frac{c}{4}\ln\tau-i2\pi\theta-1} d\tau=\int_0^\infty e^{-\frac{c}{4}(u^2+\frac{4}{c}\;i2\pi\theta u)}du$$

$$\quad\quad\quad\quad\;\;\;\;\quad\quad\;\;\;=e^{-\frac{4}{c}(\pi \theta)^2}\int_0^\infty e^{-\frac{c}{4}(u+\frac{4}{c}i\pi\theta)^2 }du=\sqrt{\pi}\; \frac{2}{\sqrt{c}}\;e^{-\frac{4}{c}(\pi \theta)^2},$$
and
$$\int_0^\infty Q(u\sqrt{\tau})Q^*(u/\sqrt{\tau})\frac{du}{u}=\int_0^\infty (u\sqrt{\tau})^{2H-\frac{1}{2} \ln(u\sqrt{\tau})}(\frac{u}{\sqrt{\tau}})^{2H-\frac{1}{2} \ln(u/\sqrt{\tau})}\frac{du}{u}$$
\begin{equation}
\qquad\qquad\qquad\qquad\qquad\qquad=\tau^{-\frac{1}{4}\ln\tau}\int_0^\infty u^{4H-\ln u-1} du=\sqrt{\pi}\; e^{4H^2}\tau^{-\frac{1}{4}\ln\tau}.\label{b1}
\end{equation}
Thus,
\begin{equation}
D_1(\theta, \tau)=(\mathcal{M}|C_X|^2)(i2\pi\theta)\int_{0}^\infty Q( u\sqrt{\tau})Q^*(u/\sqrt{\tau}) \frac{du}{u}= \frac{2\pi}{\sqrt{c}}\;e^{-\frac{4}{c}(\pi \theta)^2+4H^2}\tau^{-\frac{1}{4}\ln\tau}.\label{b}
\end{equation}
By insertion of (\ref{a}) and (\ref{b}) in (\ref{22}), we have that
$$\phi_{opt}(\theta,\tau)=\frac{2\pi\; \tau^{-(c/4)\ln \tau}\;e^{(2H-i2\pi\theta)^2} }{2\pi\; \tau^{-(c/4)\ln \tau}\;e^{(2H-i2\pi\theta)^2} +  \frac{2\pi}{\sqrt{c}}\;e^{-\frac{4}{c}(\pi \theta)^2+4H^2}\tau^{-\frac{1}{4}\ln\tau}}$$

$$=\frac{1}{1+c^{-1/2}\; e^{(1-1/c)(2\pi\theta)^2+8\pi i\theta H}\; \tau^{(\frac{c-1}{4})\ln\tau}}\;.\quad$$


\noindent Validity of Eq (\ref{phi2}):
$$(\mathcal{M}Q)(i2\pi(\theta-\frac{a}{2\pi}\ln \tau))=\int_0^\infty Q(t) t^{-i2\pi(\theta-\frac{a}{2\pi}\ln \tau)-1} dt =\int_0^\infty t^{2H-\frac{1}{2}\ln t-i2\pi(\theta-\frac{a}{2\pi}\ln\tau)-1} dt\quad\quad\quad\quad$$
\begin{equation}
\quad\quad\quad\quad\quad\;\;=\int_0^\infty e^{-\frac{1}{2}u^2+u(2H-i2\pi(\theta-\frac{a}{2\pi}\ln\tau))}du=\sqrt{2\pi}\;e^{(2H-i2\pi(\theta-\frac{a}{2\pi}\ln\tau))^2/2}.\label{a3}
\end{equation}

Then, using (\ref{a3}) and $|C_X(\tau)|^2=\tau^{-(c/4)\ln \tau}$, the numerator of (\ref{lsscp}) becomes
\begin{equation}
\big{|}A_{E,X}(\theta,\tau)\big{|}^2=|C_X(\tau)|^2 |(\mathcal{M}Q)(i2\pi(\theta-\frac{a}{2\pi}\ln \tau))|^2=2\pi\;\tau^{-(c/4)\ln \tau}e^{(2H-i2\pi(\theta-\frac{a}{2\pi}\ln\tau))^2}. \label{a1}
\end{equation}
For the denominator, we have that
$$(\mathcal{M}|C_X|^2)(i2\pi(\theta-\frac{a}{2\pi}\ln\tau))=\int_0^\infty t^{-\frac{c}{4}\ln t} t^{-i2\pi(\theta-a\ln\tau)-1}dt\quad\quad\quad$$
$$\quad\quad\quad\quad\quad\quad\quad\quad\quad=\int_0^\infty e^{-\frac{c}{4}u^2-i2\pi(\theta-\frac{a}{2\pi}\ln\tau)u}du$$
$$\quad\quad\quad\quad\quad\quad\quad\quad\quad\quad=\int_0^\infty e^{-\frac{c}{4}(u^2+\frac{4}{c}i2\pi(\theta-\frac{a}{2\pi}\ln\tau)u)}du$$
\begin{equation}
\quad\quad\quad\quad\quad\quad\quad=2\sqrt{\frac{\pi}{c}}\;e^{-\frac{4\pi^2}{c}(\theta-\frac{a}{2\pi}\ln\tau)^2}.\label{b3}
\end{equation}

\noindent So, by (\ref{b1}) and (\ref{b3}),
\begin{equation}
D_1(\theta,\tau)=(\mathcal{M}|C_X|^2)(i2\pi(\theta-\frac{a}{2\pi}\ln \tau))(Q \varoast Q^*)(\tau)=2\pi c^{-1/2}e^{4H^2-\frac{4\pi^2}{c}(\theta-\frac{a}{2\pi}\ln\tau)^2} \tau^{-\frac{1}{4}\ln \tau}.\label{b2}
\end{equation}
By insertion of (\ref{a1}) and (\ref{b2}) into (\ref{lsscp}), the optimal ambiguity domain kernel (\ref{phi2}) is achieved.

\noindent Validity of Eq (\ref{phi3}):
Inserting the following relations into (\ref{mlsscpkernel}), leads to (\ref{phi3}).
$$(\mathcal{M}Q_j)(i2\pi\theta)=\int_0^\infty t^{2H_j-\frac{1}{2}\ln \tau -i2\pi\theta-1}dt=\sqrt{2\pi} e^{\frac{1}{2}(2H_j-i2\pi\theta)^2},$$

$$\;\quad\quad\mathcal{M}(C_{X_j}C^*_{X_k})(i2\pi\theta)=\int_0^\infty \tau^{-\frac{c_j+c_k}{8}\ln \tau-i2\pi\theta-1} d\tau=\sqrt{2\pi} \frac{2}{\sqrt{c_j+c_k}}\; e^{-\frac{2}{c_j+c_k}(2\pi\theta)^2},$$

$$(Q_j\varoast Q_k^*)(\tau)=\int_0^\infty (u\sqrt{\tau})^{2H_j-\frac{1}{2}\ln u\sqrt{\tau}} (\frac{u}{\sqrt{\tau}})^{2H_k-\frac{1}{2}\ln u/\sqrt{\tau}}\frac{du}{u}.$$


\bibliographystyle{unsrt}

\end{document}